\pgfplotsset{compat=newest}
\newtheorem{remark}{Remark}
\title{Numerical bifurcation~study of superconducting~patterns on a square}
\author{Nico Schl\"omer\thanks{Departement Wiskunde-Informatica, Universiteit Antwerpen, Middelheimlaan 1, 2020 Antwerpen, Belgium}
\and Daniele Avitabile\thanks{Department of Mathematics, University of Surrey,
Guildford, GU2 7XH, UK}
 \and Wim Vanroose\thanks{Departement Wiskunde-Informatica, Universiteit Antwerpen, Middelheimlaan 1, 2020 Antwerpen, Belgium}}
\newcommand\A{\ensuremath{\mathbf{A}}}
\newcommand\x{\ensuremath{\mathbf{x}}}
\newcommand\B{\ensuremath{\mathbf{B}}}
\renewcommand\H{\ensuremath{\mathbf{H}}}
\newcommand\n{\ensuremath{\mathbf{n}}}
\newcommand\R{\ensuremath{\mathbb{R}}}
\newcommand\K{\ensuremath{\mathbb{K}}}
\newcommand\C{\ensuremath{\mathbb{C}}}
\newcommand\T{\ensuremath{\nxs{S^1}}}
\newcommand\bn{\ensuremath{\bm{\nabla}}}
\newcommand\Dpsi{\ensuremath{\delta\psi}}
\renewcommand\i{\ensuremath{\mathbbm{i}}}
\newcommand\e{\ensuremath{\text{\upshape{e}}}}
\newcommand\GL{\ensuremath{\mathcal{G\!L}}}
\newcommand\dfn{\ensuremath{\mathrel{\mathop:}=}}
\newcommand\igralnl[4]{\ensuremath{\int\nolimits_{#1}^{#2} #3 \, \mathrm{d} #4}}
\newcommand\norm[1]{\ensuremath{\left\| #1 \right\|}}
\newcommand\tp{\ensuremath{\mathrm{T}}}
\newcommand\range{\ensuremath{\mathcal{R}}}
\renewcommand\L{\ensuremath{\mathcal{L}}}
\newcommand\J{\ensuremath{\mathcal{J}}}
\newcommand\Ken{\ensuremath{\mathcal{K}}}
\newcommand\conj[1]{\ensuremath{\overline{#1}}}
\newcommand\relphantom[1]{\mathrel{\phantom{#1}}}
\newcommand{\nxs}{}
\newcommand{\Dfour}{\ensuremath{D_4}}
\newcommand\rev[1]{\textcolor{magenta}{#1}}
\DeclareMathOperator{\spn}{span}
\DeclareMathOperator{\alg}{alg}
\newlength\figurewidth
\newlength\figureheight
\begin{document}

\maketitle
\begin{abstract}
This paper considers the extreme type-II Ginzburg--Landau~equations
that model vortex patterns in superconductors. The nonlinear PDEs are
solved using Newton's method, and properties of the Jacobian operator
are highlighted.  Specifically, it is illustrated how the operator can
be regularized using an appropriate phase condition.  For a
two-dimensional square sample, the numerical results are based on a
finite-difference discretization with link variables that preserves
the gauge invariance. For two exemplary sample sizes, a thorough
bifurcation analysis is performed using the strength of the applied
magnetic field as a bifurcation parameter and focusing on the symmetries of this
system. The analysis gives new insight in the transitions between stable and
unstable states, as well as the connections between stable solution
branches.
\end{abstract}

\begin{keywords}
Superconductors, Ginzburg--Landau system, symmetry-breaking bifurcations, vortices,
regularization.
\end{keywords}

\section{Introduction}\label{sec:introduction}

In this article, we study the symmetry-breaking transitions between stable and
unstable patterns in small-sized superconducting samples. Superconductors are
materials that expel magnetic fields and exhibit zero electrical resistance
when they are below a characteristic temperature $T_{\mathrm{c}}$.
Mathematically, the superconductor's states are described by a set of
nonlinear PDEs, known as the Ginzburg--Landau system \cite{DGP:1992:AAG}.

For simplicity, let us suppose that a sample of superconducting material occupies an
open, bounded region $\Omega$ of the Euclidean space, immersed in an external
magnetic field $\H_0$ (see Figure~\ref{fig:meissner}). 
Above a critical temperature $T_{\mathrm{c}}$, the material behaves like a normal
conductor:
it exhibits electrical resistivity and is homogeneously penetrated by the applied magnetic field. The material is said
to be in a \emph{homogeneously non-superconducting state} (or \emph{normal
state}).

At low temperatures, $T < T_{\mathrm{c}}$, the material exhibits a \emph{complete}
loss of resistivity, resulting in the formation of superconducting currents in the
sample. Such currents give rise to an induced magnetic field
and the total magnetic field $\B$ is expelled from the interior of the
sample. Below a certain critical field strength $H_{\mathrm{c1}}$, the magnetic field is expelled
entirely; the material is said to be in a \emph{homogeneously superconducting state}.
Below the critical temperature and for
stronger applied magnetic fields, however, mixed configurations can exist:
the magnetic field penetrates only in confined regions of the sample. For so-called
\emph{type-II superconductors}~\cite{goodman1966}, those areas are circular
\emph{vortices}, arranged in characteristic patterns.

\begin{figure}
  \centering
  \includegraphics{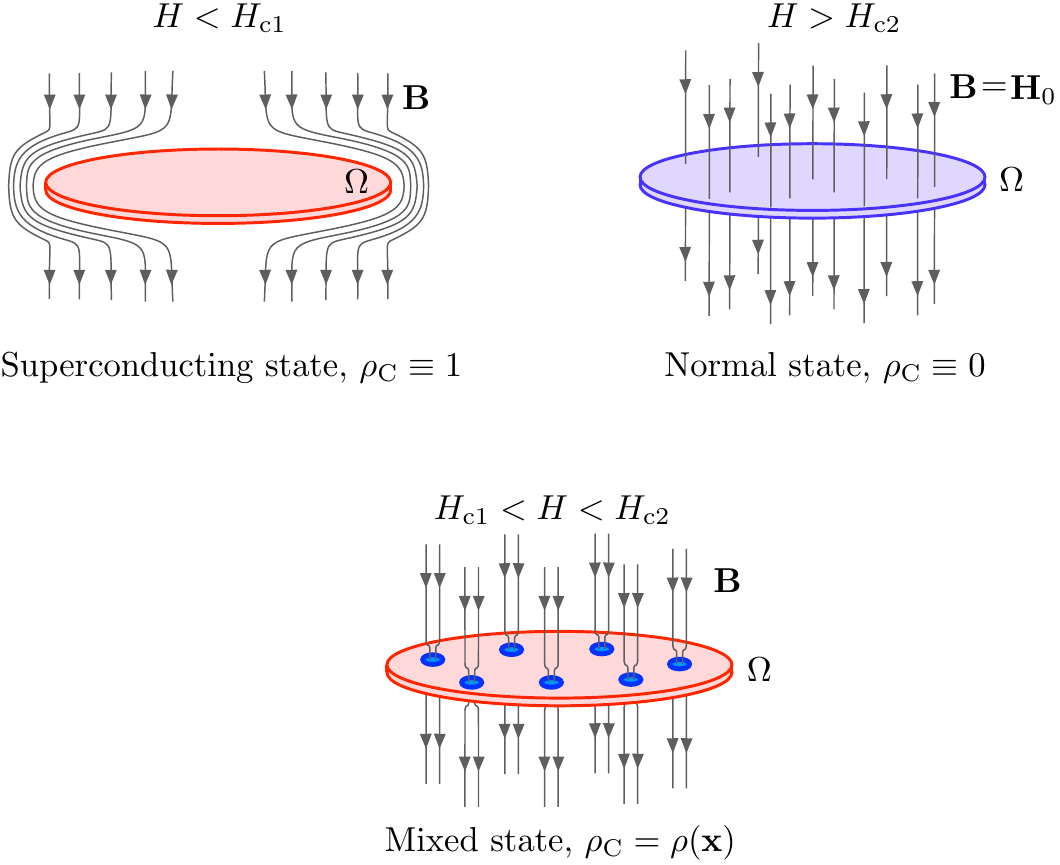}
  \caption{States of a superconducting sample immersed in an external magnetic field $\H_0$.
  Top: below the critical temperature and for $H<H_{\textrm{c}1}$, the sample is in a homogeneously superconducting
  state in which internal currents are generated and the total magnetic field $\B$ is expelled from the
  specimen (left); above the critical temperature or for $H>H_{\textrm{c}2}$, the material is in a normal state and
  the external magnetic field penetrates the whole sample (right). Bottom: type-II
  superconductors can exhibit mixed states, in which vortices of normal
  conductivity are embedded in a superconductive background. In the mixed
  configuration, $\B$ can penetrate the sample only through the 
  vortices, giving rise to characteristic superconductive patterns.}
  \label{fig:meissner}
\end{figure}

In large samples, the vortices organize in a regular pattern, also known as the
\emph{Abrikosov~lattice} (see \cite{abrikosov1957}, \cite{goodman1966} and
references therein). In small samples, however, owing to the boundaries, the observed
patterns can significantly deviate from the regular lattice and their organization
depends sensitively on the intensity of the applied magnetic field as well as
the geometry and the symmetries of the sample.
These small-scale (mesoscopic) systems with simple geometric shapes like discs,
triangles, or squares are of technological interest since they can be
built into nanoscale devices \cite{aladyshkin2009nucleation}.

In applications, one is interested in finding steady states of the system, studying
their stability and their dependence upon the external magnetic field. The state of a
superconducting sample is, in general, characterized by two quantities:
the total magnetic field $\B = \colon \R^3 \to \R^3$ and
the density $\rho_{\text{C}} \colon \Omega \cup \partial \Omega \to \R$ of electron pairs
which constitute superconductivity (Cooper pairs).

A typical approach for studying superconducting states is to define a suitable
Gibbs~energy for the system and to derive a set of evolution equations for the
order parameter $\psi \colon \Omega \cup \partial \Omega \to \C$,
$|\psi|^2=\rho_\textrm{C}$, and
the magnetic vector potential $\A \colon \R^3 \to \R^3$, $\bn\times\A=\B$. The
resulting system in known as the \textit{Ginzburg--Landau system}
\cite{DGP:1992:AAG}. The associated
initial-boundary-value problem has been studied both analytically an numerically.
Various results on the existence and uniqueness of solutions, for example,
can be found in
\cite{bethuel1994ginzburg,sandier2007vortices,LD:1997:GLV} and references therein.

However, it is often necessary to resort to numerical simulation to study the complex
interaction of vortices in samples of arbitrary shapes: a popular strategy is to
time-step the Ginzburg--Landau system via Gauss--Seidel iterations until an
equilibrium is reached; the external magnetic field is then varied quasi-statically,
and a new steady state is found \cite{PhysRevB.65.104515,PhysRevLett.81.2783}. 
We show a typical result of this analysis in Figure~\ref{fig:experimentalCascade}.
The solution branches appear disconnected: when an instability is met, the direct
simulation jumps to a nearby stable branch, as the employed numerical method can
compute only stable solutions. 

\begin{figure}
\centering
\setlength\figurewidth{0.27\textwidth}
\setlength\figureheight{0.9\figurewidth}
\hfill
\subfloat[Disk.]{\begin{tikzpicture}
\begin{axis}[width=\figurewidth,height=\figureheight,scale only axis,xmin=0.0, xmax=2.2, ymin=-1.0, ymax=0.0,ylabel=$F/F_0$]
\addplot [-] table [y expr=\thisrowno{1}*2] {figures/baelus-peeters/enL0.dat};
\addplot [-] table [y expr=\thisrowno{1}*2] {figures/baelus-peeters/enL1.dat};
\addplot [-] table [y expr=\thisrowno{1}*2] {figures/baelus-peeters/enL2.dat};
\addplot [-] table [y expr=\thisrowno{1}*2] {figures/baelus-peeters/enL3.dat};
\addplot [-] table [y expr=\thisrowno{1}*2] {figures/baelus-peeters/enL4.dat};
\addplot [-] table [y expr=\thisrowno{1}*2] {figures/baelus-peeters/enL5.dat};
\addplot [-] table [y expr=\thisrowno{1}*2] {figures/baelus-peeters/enL6.dat};
\addplot [-] table [y expr=\thisrowno{1}*2] {figures/baelus-peeters/enL7.dat};
\addplot [-] table [y expr=\thisrowno{1}*2] {figures/baelus-peeters/enL8.dat};
\addplot [-] table [y expr=\thisrowno{1}*2] {figures/baelus-peeters/enL9.dat};
\addplot [-] table [y expr=\thisrowno{1}*2] {figures/baelus-peeters/enL10.dat};
\addplot [dashed] table [y expr=\thisrowno{1}*2] {figures/baelus-peeters/enL2m.dat};
\addplot [dashed] table [y expr=\thisrowno{1}*2] {figures/baelus-peeters/enL3m.dat};
\addplot [dashed] table [y expr=\thisrowno{1}*2] {figures/baelus-peeters/enL4m.dat};
\addplot [dashed] table [y expr=\thisrowno{1}*2] {figures/baelus-peeters/enL5m.dat};
\end{axis}
\end{tikzpicture}}
\hfill
\subfloat[Square.]{\begin{tikzpicture}
\begin{axis}[width=\figurewidth,height=\figureheight,scale only axis,xmin=0.0, xmax=2.2, ymin=-1.0, ymax=0.0,yticklabels=\empty]
\addplot [-] table [y expr=\thisrowno{1}*2] {figures/baelus-peeters/2enL0.dat};
\addplot [-] table [y expr=\thisrowno{1}*2] {figures/baelus-peeters/2enL1.dat};
\addplot [-] table [y expr=\thisrowno{1}*2] {figures/baelus-peeters/2enL2.dat};
\addplot [-] table [y expr=\thisrowno{1}*2] {figures/baelus-peeters/2enL3.dat};
\addplot [-] table [y expr=\thisrowno{1}*2] {figures/baelus-peeters/2enL6.dat};
\addplot [-] table [y expr=\thisrowno{1}*2] {figures/baelus-peeters/2enL7.dat};
\addplot [-] table [y expr=\thisrowno{1}*2] {figures/baelus-peeters/2enL8.dat};
\addplot [-] table [y expr=\thisrowno{1}*2] {figures/baelus-peeters/2enL9.dat};
\addplot [-] table [y expr=\thisrowno{1}*2] {figures/baelus-peeters/2enL10.dat};
\addplot [-] table [y expr=\thisrowno{1}*2] {figures/baelus-peeters/2enL11.dat};
\addplot [dashed] table [y expr=\thisrowno{1}*2] {figures/baelus-peeters/2enL2m.dat};
\addplot [dashed] table [y expr=\thisrowno{1}*2] {figures/baelus-peeters/2enL3m.dat};
\addplot [dashed] table [y expr=\thisrowno{1}*2] {figures/baelus-peeters/2enL4m.dat};
\addplot [dashed] table [y expr=\thisrowno{1}*2] {figures/baelus-peeters/2enL5m.dat};
\addplot [dashed] table [y expr=\thisrowno{1}*2] {figures/baelus-peeters/2enL6m.dat};
\end{axis}
\end{tikzpicture}}
\hfill
\subfloat[Triangle.]{\begin{tikzpicture}
\begin{axis}[width=\figurewidth,height=\figureheight,scale only axis,xmin=0.0, xmax=2.2, ymin=-1.0, ymax=0.0,yticklabels=\empty]
\addplot [-] table [y expr=\thisrowno{1}*2] {figures/baelus-peeters/3enL0.dat};
\addplot [-] table [y expr=\thisrowno{1}*2] {figures/baelus-peeters/3enL1.dat};
\addplot [-] table [y expr=\thisrowno{1}*2] {figures/baelus-peeters/3enL2.dat};
\addplot [-] table [y expr=\thisrowno{1}*2] {figures/baelus-peeters/3enL3.dat};
\addplot [-] table [y expr=\thisrowno{1}*2] {figures/baelus-peeters/3enL4.dat};
\addplot [-] table [y expr=\thisrowno{1}*2] {figures/baelus-peeters/3enL5.dat};
\addplot [-] table [y expr=\thisrowno{1}*2] {figures/baelus-peeters/3enL7.dat};
\addplot [-] table [y expr=\thisrowno{1}*2] {figures/baelus-peeters/3enL8.dat};
\addplot [-] table [y expr=\thisrowno{1}*2] {figures/baelus-peeters/3enL9.dat};
\addplot [-] table [y expr=\thisrowno{1}*2] {figures/baelus-peeters/3enL10.dat};
\addplot [-] table [y expr=\thisrowno{1}*2] {figures/baelus-peeters/3enL11.dat};
\addplot [dashed] table [y expr=\thisrowno{1}*2] {figures/baelus-peeters/3enL2m.dat};
\addplot [dashed] table [y expr=\thisrowno{1}*2] {figures/baelus-peeters/3enL3m.dat};
\addplot [dashed] table [y expr=\thisrowno{1}*2] {figures/baelus-peeters/3enL4m.dat};
\addplot [dashed] table [y expr=\thisrowno{1}*2] {figures/baelus-peeters/3enL5m.dat};
\addplot [dashed] table [y expr=\thisrowno{1}*2] {figures/baelus-peeters/3enL6m.dat};
\end{axis}
\end{tikzpicture}}
\hfill
  \caption{(Reproduced from \cite{PhysRevB.65.104515}.)
Typical cascades of branches of steady states of the Ginzburg--Landau problem
for various two-dimensional sample shapes.
The plots show the strength of the applied magnetic field (homogeneous and perpendicular
to the sample) versus the normalized energy of the states.
}
\label{fig:experimentalCascade}
\end{figure}
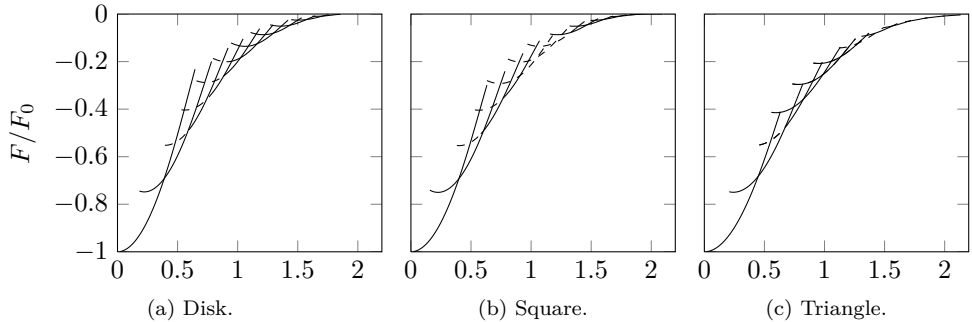

The plots in Figure~\ref{fig:experimentalCascade} are in good agreement with the
hysteretic behavior that has also been observed
experimentally~\cite{Schweitzer1967}, but they are not yet fully understood
from the point of view of bifurcation analysis. The main results in this direction
are confined to one-dimensional spatial domains (see \cite{dancer2000global,
aftaliontroy, aftalion2000asymptotic} and references therein). The two-dimensional
case has been studied by means of direct numerical
simulation for various material parameters and strengths of the applied magnetic
field~\cite{aftalion2002bifurcation}, as well as for a variety of different shapes
and domain sizes (see
\cite{PhysRevLett.79.4653,PhysRevB.65.104515,PhysRevB.70.144523} and references
therein), but the bifurcation scenario of the Ginzburg--Landau problem in two and
three dimensions is largely unexplored.

The main motivation of the present paper is to classify the instabilities occurring
in superconducting samples using numerical continuation, as opposed to time-dependent
simulations. To this end, we define a well-posed boundary-value problem, choose a
spatial discretization, and find steady states of the Ginzburg--Landau problem by
Newton iterations. 
More specifically, we focus on square samples of extreme type-II
superconductors subject to a homogeneous external magnetic field. In this case, the
Ginzburg--Landau problem simplifies considerably as it is possible to derive the
vector potential $\A$ explicitly and then solve a nonlinear partial
differential equation for the order parameter $\psi$.

We expect that the symmetries of the problem will influence the
bifurcation landscape. As we will see, the relevant groups for the computations
presented in this paper are the circle group $\T$ and the dihedral group \Dfour{}.
The discrete \Dfour{} symmetry suggests that we can use
the \emph{equivariant branching lemma} to predict symmetries of the emerging
branches at bifurcation points \cite{golubitsky2002symmetry,Hoyle:2006:PF}. 
On the other hand, the continuous $\T$-symmetry
induces the presence of a zero eigenvalue in the spectrum of the linear operator
associated with the boundary-value formulation, causing problems to the
convergence of the Newton iterations.

We regularize the system by extending the boundary-value problem and employing a
suitable phase condition, using the framework proposed by Champneys and Sandstede
\cite{CS:2007:NCC}. The extended boundary-value formulation is then discretized using
a common gauge-preserving technique and the patterns are path-followed in parameter
space via pseudo-arclength continuation.
 
To the best of our knowledge, this approach has never been employed before for the
Ginzburg--Landau problem, albeit the application of Newton's method has been proposed
in \cite{DGP:1992:AAG} and inexact Newton methods are often used in
practice~\cite{KK:1995:VCT}. In addition, equivariant bifurcation theory has never
been used to explain the instabilities found experimentally and numerically in
superconducting samples, even though the importance of symmetries was pointed out in
\cite{chibotaru2000symmetry}, where the system is linearized around the trivial
steady state and the relative eigenmodes are studied in the context of
$C_4$-symmetries.

The main result of the present paper is a classification of the
bifurcations occurring in square domains of small and moderate
sizes. In small samples, where the domain can host just a single
superconducting vortex, the bifurcations are entirely determined by
the natural two-dimensional irreducible representation of $D_4$ (see
\cite{Hoyle:2006:PF}, Section $4.3$). However, as the domain size
increases, the bifurcation diagram gets more complicated and it
involves also one-dimensional irreducible representations of
$D_4$. Furthermore, in larger samples we compute stable vortices of
higher multiplicity: these structures were previously found by direct
numerical simulation \cite{PhysRevB.65.104515}, but their formation
was still an open problem; our analysis shows that vortices with
different multiplicity are all linked in parameter space via
symmetry-breaking bifurcations. Furthermore, we used Newton-Krylov methods to solve
the system, exploiting the properties of the Jacobian operator in the Krylov
iterations.

The remainder of the article is organized as follows. Section
\ref{sec:review} discusses the Ginzburg--Landau system in the
large-$\kappa$ limit and details its symmetries.
Section~\ref{sec:self adjointness} contains material on the
  linearization of the Ginzburg--Landau system and its
  self-adjointness, which is of importance for the numerical solution
  of the linear system associated with each Newton iteration.
Section~\ref{subsection:regularization} is concerned with the
regularization of the equations. Details on the discretization of the
system with link variables and properties thereof can be found in
Section~\ref{sec:discretized}. The numerical computations are included in
Section~\ref{sec:results}, where we show families of solutions as a
function of the strength of the applied magnetic field. We relate the
bifurcations to the symmetries of the solutions with the help of the
equivariant branching lemma.  The appendix contains an extension of
Keller's bordering lemma which is used in
Sections~\ref{subsection:regularization} and \ref{sec:discretized}.

\paragraph{Notations}
Throughout this article we will use bold symbols ($\x$) for vector-valued quantities.
For any $z\in\C$, $\Re(z)$ and $\Im(z)$ denote its real and imaginary parts, 
$\overline{z}$ is used for complex conjugation. Similarly, for $\psi:\Omega\to\C$,
$\overline{\psi}:\Omega\to\C$ is such that $\overline{\psi}(x)\dfn
\overline{\psi(x)}$ for all $x\in\Omega$. For function spaces, we use
$C^{k}(\Omega)$ denote the vector space of all $k$
times differentiable functions.
We use $L^2_{\K}(\Omega)$ for the Hilbert-space in the field $\K$
of square-integrable functions
over $\Omega$, equipped with the inner product $\left\langle
\varphi,\psi\right\rangle = \int_\Omega \overline{\varphi} \psi\,\mathrm{d}\Omega$ for all
$\varphi,\psi\in L^2_{\K}(\Omega)$. 
The range of a linear operator $L$ is denoted by $\range(L)$.
For the symmetry groups under consideration, the symbol $\T$ is used
to denote the circle group $\{z \in \mathbb \C : |z| = 1\}$ (which is group-isomorphic
to SO(2)).
For a given state $\psi$, $\Sigma_{\psi}$ denotes the symmetry group under the
action of which $\psi$ is invariant.

\section{The Ginzburg--Landau equation}\label{sec:review}

For an open, bounded domain $\Omega\subset\nxs{\R^3}$,
with a piecewise smooth boundary $\partial\Omega$, the Ginzburg--Landau
problem is usually derived by minimizing the Gibbs free
energy functional 
\begin{equation}\label
{eq:Gibbs}
\begin{split}
G(\psi,\A) - G_{\mathrm{n}}
&=
\xi\frac{|\alpha|^2}{\beta}
\int_{\Omega}
\Bigg[
-|\psi|^2
+ \frac{1}{2}|\psi|^4
+ \left|-\i\bn\psi - \A\psi \right|^2\\
&\relphantom{=}\phantom{\xi\frac{|\alpha|^2}{\beta}\int_{\Omega}\Bigg[}
+ \kappa^2 (\bn\times \A)^2
- 2\kappa^2 (\bn\times \A)\cdot \H_0
\Bigg]\, \mathrm{d}\Omega,
\end{split}
\end{equation}
where the state $(\psi,\A)$ is in the natural energy space
such that the integral is well-defined \cite{DGP:1992:AAG}.
As we have seen in the introduction, the scalar $\psi$ is commonly
referred to as the \emph{order parameter}, while $\A$ is
the magnetic vector potential corresponding to the total magnetic field.
The physical observables associated with the state $(\psi,\A)$ are the density
$\rho_{\text{C}}=|\psi|^2$ of the superconducting charge carriers
and the total magnetic
field $\B=\bn\times\A$. The constant $G_{\mathrm{n}}$ represents the energy
associated with the entirely normal (non-superconducting) state.

The energy~(\ref{eq:Gibbs}) is written in its dimensionless form and it depends upon
the impinging magnetic field $\H_0$ and the material parameters $\alpha, \beta,
\kappa, \xi\in\R$. The most relevant parameters are $\kappa$ and
$\xi$; in particular, $\kappa=\lambda/\xi$ is the ratio of the penetration depth $\lambda$
(the length scale at which the magnetic field penetrates the sample) to the coherence
length $\xi$ (the characteristic spatial scale of $\psi$). A superconductor is said
to be of \textit{type I} if $\kappa < 1/\sqrt{2}$, and of \textit{type II} otherwise.

To complete our description of the Gibbs energy, we remark that we have scaled
the domain $\Omega$ in units of the coherence length $\xi$ while another common
choice is to scale the domain by $\lambda$ \cite{DGP:1992:AAG}.

Starting from the Gibbs energy and using standard calculus of variations, it is
possible to derive the Ginzburg--Landau equations \cite{DGP:1992:AAG}, a
boundary-value problem in the unknowns $\psi$ and $\A$.
As anticipated in the introduction, we will simplify the problem
and consider only the limit $\kappa\to\infty$ (\textit{extreme type-II
superconductors}): this approximation gives satisfactory results for all
high-temperature superconductors with large but finite values of $\kappa$, typically
$50<\kappa<100$.

In this case, the Ginzburg--Landau problem decouples and we have
\begin{equation}\label{eq:GL}
\begin{cases}
0 = \left(-\i\bn - \A\right)^2 \psi - \psi \left(1 - |\psi|^2\right) \quad \text{in } \Omega,  \\[3mm]
0 = \n \cdot ( -\i\bn - \A) \psi \quad \text{on } \partial\Omega,
\end{cases}
\end{equation}
where $\A=\A(\H_0)$ is given by the relations
\begin{equation}\label{eq:GLA}
\begin{cases}
\bn\times(\bn\times \A) = 0 \quad \text{in } \Omega,\\
\n \times (\bn\times\A) = \n \times  \H_0\text{ on } \partial\Omega.
\end{cases}
\end{equation}
Since for this decoupled system there are no magnetization effects,
the magnetic fields $\H_0$ and $\B$ coincide.

Since the sample's width scales with $\xi$, the large-$\kappa$ limit
$\lambda \gg \xi$ means that the magnetic field $\A$
penetrates the whole sample, independently of $\psi$.

In passing, we note that Equation~\eqref{eq:GL} does not
coincide with the so-called \emph{Complex Ginzburg--Landau equation} (see
\cite{aranson2002} and references therein).

In the present paper, we consider a two-dimensional square sample
\[
\Omega=\Omega_d\dfn\{ (x,y,z)\in\R^3: (x,y)\in(-d/2,d/2)^2, z=0 \}, \quad d\in\R^+,
\]
subject to a perpendicular, homogeneous magnetic field $\H_0 = (0,0,\mu)^\tp$, $\mu\in\R$.
From \eqref{eq:GLA} we can derive an expression for the induced vector potential
\begin{equation}\label{eq:Amu}
\A(x,y;\mu) = (A_x(x,y;\mu), A_y(x,y;\mu) )^\tp \dfn \frac{1}{2}( - \mu y, \mu x )^\tp,
\end{equation}
where we have deliberately omitted the third component.  

In conclusion, we will consider the following boundary-value problem with $X_d$ being
the natural energy space over $\Omega_d$ associated
with the Gibbs~energy \eqref{eq:Gibbs} and $Y_d$ its dual space.
The equations are
\begin{equation}\label{eq:GL compact}
\begin{split}
&\relphantom{=} \GL(\psi; \mu): X_d\times\R \to Y_d,\\[0.5ex]
0 &= \GL(\psi; \mu) \dfn
\begin{cases}
\left(-\i\bn - \A(\mu)\right)^2 \psi - \psi \left(1 - |\psi|^2\right) \quad \text{on } \Omega_d,\\[0.5ex]
\n \cdot ( -\i\bn - \A(\mu)) \psi \quad \text{on } \partial\Omega_d,
\end{cases}
\end{split}
\end{equation}
where $\A(\mu)$ is given by \eqref{eq:Amu}, with the parameters $\mu\in\R$, $d\in\R^+$ and
with the boundary conditions given in the sense of traces. To shorten the notation,
the dependence of $\A$ on $\mu$ will often not made explicit in the remainder of the
text.

Note that, because $\Omega_d$ is convex and
$\A\in C^{\infty}(\overline{\Omega})$,
solutions in the natural energy space immediately have higher regularity \cite{BBX:2003:REE}
and in fact coincide with the classical strong solutions in $C^2(\Omega)\cap C^1(\overline{\Omega})$.

\paragraph{Symmetries}\label{sec:symmetries}
As mentioned in Section~\ref{sec:introduction}, symmetries play an important role in
the bifurcations scenario of our problem.
The Ginzburg--Landau system for extreme type-II superconductors, \eqref{eq:GL compact},
is left invariant by the action
of the circle group $\T$,
\begin{equation}\label{eq:reduced gauge invariance}
\theta_{\eta} \colon \psi \longmapsto \psi \exp(\i\eta), \quad \eta\in[0,2\pi).
\end{equation}
The circle-group symmetry is also referred to as \emph{phase symmetry}.

\begin{figure}
\centering
\includegraphics[width=0.85\textwidth]{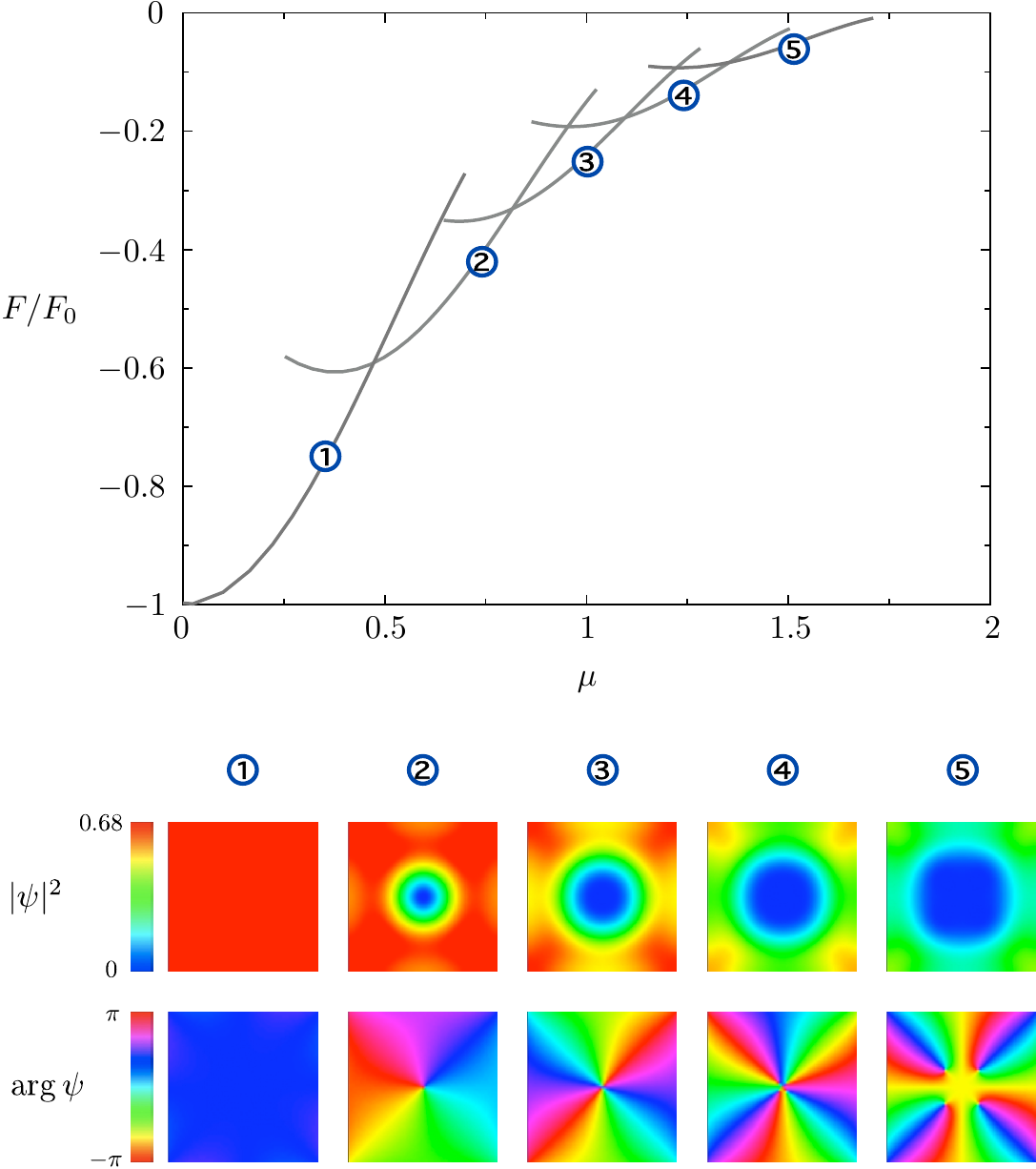}
\caption{Several families of stable solution branches of the Ginzburg--Landau
equation \eqref{eq:GL compact} as a function of the bifurcation parameter $\mu$,
the intensity of the applied magnetic field. The results, obtained via numerical
continuation, are computed for a square of side length $d = 5.5$.  
Top: On the vertical axis, we plot a measure of the Gibbs energy of the states (see
remark~\ref{remark:energy} on page~\pageref{remark:energy}).
We observe a cascade of instabilities similar to the one shown in the center panel of
Figure~\ref{fig:experimentalCascade}, albeit the number of solution branches is
larger in the latter diagram.
As we will see in Section~\ref{sec:intermediate}, the number of stable
primary branches increases with the domain size $d$ (the computations in the center
panel of Figure~\ref{fig:experimentalCascade} are for $d=7.1$). 
Bottom: Selected stable patterns along the
branches. Vortices are characterized by a localized region of low
supercurrent density (blue in the pictures), as well as a $2\pi
k$-phase change in $\arg\psi$, where $k$ is the \emph{multiplicity} of
the vortex. For higher values of $\mu$, vortices with higher
multiplicities are found: pattern $3$, for instance, has a $2 \times
2\pi$ phase change, it is a \emph{giant vortex} with multiplicity
$2$. All patterns shown have full $D_4$-symmetry and we expect to
interpret the diagram in terms of symmetry-breaking bifurcations.}
\label{fig:example solutions}
\end{figure}

In addition, $\GL(\psi;\mu)$ is invariant under rotations by $\pi/2$
\begin{align}
  \rho & \colon \psi(x,y) \longmapsto \psi(-y,x) \label{eq:rotation} \\
  \intertext{and conjugated mirroring along the $y$-axis,}
  \sigma & \colon \psi(x,y) \longmapsto \conj{\psi(-x,y)}\label{eq:mirror}.
\end{align}
Note that, up to conjugation in $\sigma$, these are the classical group actions that
generate the $D_4$ symmetry group of the square. In fact, the group generated by
$\rho$ and $\sigma$ is isomorphic to $D_4$.
Even though symmetries of the Ginzburg--Landau problem have been considered before
\cite{chibotaru2000symmetry}, the analysis was limited only to a linearization of the
Ginzburg--Landau operator in the presence of rotations~\eqref{eq:rotation}; in our
case, we will consider the nonlinear problem and account also for 
conjugate reflections~\eqref{eq:mirror}.

In conclusion, the relevant symmetry group for our problem is generated by the actions
\eqref{eq:reduced gauge invariance}--\eqref{eq:mirror},
\[
\Gamma \dfn \langle \theta_\eta, \rho, \sigma \rangle \cong \T \times D_4. 
\]

We refer to the reader to Section~\ref{subsection:regularization}, where we will
explain how to factor out the continuous $\T$-symmetry that induces a singularity in
the boundary-value problem associated with $\GL(\psi;\mu)$, and we conclude this
section by showing in Figure~\ref{fig:example solutions} a few examples of patterns computed
via numerical continuation. The Ginzburg--Landau problem~\eqref{eq:GL compact}
possesses two trivial solutions:  $\GL(0;\mu) = 0$ for
all $\mu\in\R$ (the normal state) and $\GL(1;0) = 0$ (the homogeneously superconducting
state). As expected, we find branches of nontrivial stable $D_4$-symmetric solutions
arranged in a characteristic cascade, in agreement with the results obtained by
direct numerical simulations where the parameter $\mu$ is varied
quasi-statically (see Figure~\ref{fig:experimentalCascade}).

\section{The Jacobian operator}\label{sec:self adjointness}
The patterns shown in Figure~\ref{fig:example solutions} were computed as regular
zeros of a nonlinear system of equations derived from the Ginzburg--Landau problem
\eqref{eq:GL compact}. The solutions were found via Newton-Krylov
iterations, that require the specification of the action of the Jacobian associated
with $\GL(\psi;\mu)$ (see \cite{Kelley1995} for details on iterative linear solvers).
Even though there were previous attempts to
solve \eqref{eq:GL compact}
with a modified Newton's method \cite{KK:1995:VCT}, those implementations did not
retain second-order convergence.
Before deriving explicitly the regularization procedure that allowed us to compute
the superconducting patterns, we introduce in this section the Jacobian
operator $\J(\psi;\mu)$ associated with $\GL(\psi;\mu)$, and prove its
self-adjointness with respect to a suitably-defined inner product in $X_d$.

For a given
$d\in\R^+$, $\mu\in\R$, and $\psi,\delta\psi\in X_d$, let us consider
\[
\begin{split}
 & \GL(\psi+\Dpsi;\mu) - \GL(\psi;\mu)\\
&= \left[(-\i \bn - \A)^2(\psi + \Dpsi)
- (\psi + \Dpsi)\left(1 - \conj{(\psi +\Dpsi)}(\psi + \Dpsi)\right) \right]\\
&\relphantom{=}{} - \left[(-\i \bn - \A)^2 \psi - \psi\left(1 - \conj{\psi}\psi\right) \right]\\
&            =           (-\i \bn - \A)^2\Dpsi    + \psi\left( \conj{\psi}\,\Dpsi+\psi{}\,\conj{\Dpsi} + \conj{\Dpsi}\,\Dpsi \right)\\
&\relphantom{=} \phantom{(-\i \bn - \A)^2\Dpsi}{} - \Dpsi \left(1-\conj{\psi}\psi\right)\\
&\relphantom{=} \phantom{(-\i \bn - \A)^2\Dpsi}{} + \Dpsi\left( \conj{\psi}\,\Dpsi+\psi\,\conj{\Dpsi} + \conj{\Dpsi}\,\Dpsi \right).
\end{split}
\]
Neglecting higher-order terms in $\Dpsi$, we obtain the Jacobian operator
\begin{equation}\label{eq:jacobian}
\begin{split}
&\J(\psi;\mu): X_d \to Y_d,\\
&\J(\psi;\mu)\varphi
\dfn \left( (-\i \bn - \A)^2  - 1 + 2|\psi|^2 \right) \varphi + \psi^2 \conj{\varphi}.
\end{split}
\end{equation}
Note that $\J(\psi;\mu)$ is indeed linear when defined over $X_d$ and $Y_d$ as $\R$-vector spaces.

We are now going to prove that the Jacobian operator~(\ref{eq:jacobian}) is self-adjoint with respect
to the inner product
\begin{equation}\label{eq:real inner product}
\left\langle\cdot,\cdot\right\rangle_{\R}\dfn \Re\left\langle\cdot,\cdot\right\rangle_{L^2_{\C}(\Omega)}.
\end{equation}
This property allows us to employ
standard methods for symmetric linear systems such as the
conjugate gradient or the minimal residual method (using this inner product) 
to invert the Jacobian at each Newton iteration.
Note that $\left\langle\cdot,\cdot\right\rangle_{\R}$
coincides with the natural inner product in $(L^2_{\R}(\Omega))^2$,
which is isomorphic to $L^2_{\C}(\Omega)$, because for any given pair
$\phi, \psi\in L^2_{\C}(\Omega)$, one has
\[
\left\langle
\begin{pmatrix}
\Re\phi\\
\Im\phi\\
\end{pmatrix},
\begin{pmatrix}
\Re\psi\\
\Im\psi\\
\end{pmatrix}
\right\rangle_{(L^2_{\R}(\Omega))^2}
=
\left\langle
\Re\phi, \Re\psi
\right\rangle_{L^2_{\R}(\Omega)}
+
\left\langle
\Im\phi, \Im\psi
\right\rangle_{L^2_{\R}(\Omega)}
=
\left\langle
\phi, \psi
\right\rangle_{\R}.
\]

The following lemma gives insight into the adjoint of a linear operator
of the form~(\ref{eq:jacobian}). The lemma is formulated for general
Hilbert spaces, and we will use it for $H=L^2_{\C}(\Omega)$; in our case, the
operation $C$ mentioned below will be the pointwise complex conjugation.

\begin{lemma}\label{lemma: hilbert space adjointness}
Let $H$ be a Hilbert-space with inner product $\left\langle\cdot,\cdot\right\rangle_H$
and let there be an operation $C:H\to H$ such that
\begin{align}\label{eq:theta}
  & C(\alpha x) = \alpha C(x)\quad \textrm{for all $\alpha\in\R, x\in H$}\\
  & \Re\left\langle C(x), y\right\rangle_H = \Re\left\langle x,
  C(y)\right\rangle_H \quad \textrm{for all $x,y\in H$}.
\end{align}
Let $\L_1,\L_2:H\to H$ be linear operators. For every $x\in H$, let
\[
\L x \dfn \L_1 x + \L_2 C(x).
\]
Then $\L$ is a linear operator on $H$ as $\R$-vector space, and its
adjoint $\L^*$ with respect to the inner product~$\left\langle\cdot,\cdot\right\rangle_{\R}\dfn \Re\left\langle\cdot,\cdot\right\rangle_H$
is given by
\[
\L^* x \dfn \L_1^* x + C(\L_2^* x),
\]
where $\L_1^*$, $\L_2^*$ are the adjoint operators in $H$ of $\L_1$, $\L_2$,
respectively.
\end{lemma}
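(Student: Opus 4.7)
The plan is a direct computation using the defining property of adjoints, with the swap identity on $C$ providing the one non-routine step. First, I would make explicit that $\langle\cdot,\cdot\rangle_\R \dfn \Re\langle\cdot,\cdot\rangle_H$ is a genuine inner product on $H$ viewed as an $\R$-vector space (bilinearity over $\R$ follows from sesquilinearity over $\C$, symmetry follows from $\Re\langle x,y\rangle_H = \Re\overline{\langle y,x\rangle_H} = \Re\langle y,x\rangle_H$, and positive-definiteness from $\langle x,x\rangle_H \in \R_{\geq 0}$). This makes the notion of an $\R$-linear adjoint meaningful and unique. Next I would verify that $\L$ is $\R$-linear: $\L_1$ is $\R$-linear because it is linear, and $\L_2\circ C$ is $\R$-linear by composing the $\R$-homogeneity of $C$ (hypothesis \eqref{eq:theta}) with the linearity of $\L_2$; additivity of $C$, which is needed here, is the implicit companion of the stated real-homogeneity and is what one has in the intended application (pointwise complex conjugation).

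The core computation is then a one-line chain. For arbitrary $x,y\in H$,
\[
\langle \L x, y\rangle_\R
= \Re\langle \L_1 x, y\rangle_H + \Re\langle \L_2 C(x), y\rangle_H
= \Re\langle x, \L_1^* y\rangle_H + \Re\langle C(x), \L_2^* y\rangle_H,
\]
where in the last step I have used that the $H$-adjoints $\L_i^*$ satisfy $\langle \L_i x, y\rangle_H = \langle x, \L_i^* y\rangle_H$, and taking real parts preserves this identity. Applying the swap identity for $C$ to the second term,
\[
\Re\langle C(x), \L_2^* y\rangle_H = \Re\langle x, C(\L_2^* y)\rangle_H,
\]
and collecting yields $\langle \L x, y\rangle_\R = \langle x,\, \L_1^* y + C(\L_2^* y)\rangle_\R$, identifying $\L^* y = \L_1^* y + C(\L_2^* y)$. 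Uniqueness of the $\R$-adjoint, which follows from the non-degeneracy of $\langle\cdot,\cdot\rangle_\R$, then closes the argument.

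The only real subtlety, and the step I would flag as the main obstacle, is conceptual rather than computational: one must resist the temptation to treat $\L$ as a $\C$-linear operator. Because $C$ is antilinear (complex-conjugate-homogeneous in the intended application), $\L$ is in general \emph{not} $\C$-linear, so it has no $\C$-Hilbert-space adjoint; the whole point of the lemma is that after restricting to the real structure one recovers an adjoint, and the obstruction to $\C$-linearity is exactly what forces the $C$ to reappear on the outside of $\L_2^*$ in the formula for $\L^*$. Keeping the real and complex linear structures cleanly separated in the bookkeeping is the only place where one can slip.
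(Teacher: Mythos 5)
Your proof is correct and follows essentially the same route as the paper's: split $\left\langle \L x, y\right\rangle_{\R}$ into the two terms, pass to the $H$-adjoints $\L_1^*$, $\L_2^*$, and invoke the swap identity for $C$ to move it across the inner product. The additional remarks (that $\langle\cdot,\cdot\rangle_{\R}$ is a genuine real inner product, that additivity of $C$ is tacitly needed for $\R$-linearity of $\L$, and that $\L$ is not $\C$-linear) are sound and slightly more careful than the paper's own proof, but do not change the argument.
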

\begin{proof}
Let $x,y\in H$, and consider
\[
\left\langle x , \L y\right\rangle_{\R}
= \Re\left\langle x , \L_1 \nxs{y} + \L_2 C(y)\right\rangle_H
= \Re\left\langle x,\L_1 y \right\rangle_H + \Re\left\langle x, \L_2 C(y) \right\rangle_H.
\]
Using the operator adjoints $\L_1^*$, $\L_2^*$, we get
\begin{multline*}
\left\langle x , \L y\right\rangle_{\R}
= \Re\left\langle\L_1^*x, y \right\rangle_H + \Re\left\langle \L_2^*x,  C(y) \right\rangle\\
= \Re\left\langle\L_1^*x, y \right\rangle_H + \Re\left\langle C(\L_2^*x),  y \right\rangle
= \left\langle\L_1^*x +C(\L_2^*x) , y \right\rangle_{\R}
= \left\langle \L^* x , y\right\rangle_{\R}.
\end{multline*}
\end{proof}

\begin{lemma}\label{lemma: linear part adjoint}
Let $\A(\mu) \in C^1_{\R^n}(\Omega)$, $n\in\{2,3\}$. The kinetic energy operator
\begin{equation}\label{eq:kinetic energy operator}
\begin{split}
&\Ken(\mu): X_d \to Y_d,\\
&\Ken(\mu)\varphi \dfn
\begin{cases}
(-\i \bn - \A)^2 \varphi,\;&\text{in } \Omega_d\\
\n\cdot(-\i \bn - \A)\varphi,\;&\text{on } \partial\Omega_d\\
\end{cases}
\end{split}
\end{equation}
is self-adjoint with respect to the inner product
$\left\langle\cdot,\cdot\right\rangle_{L^2_{\C}(\Omega_d)}$
\nxs{over the subspace $X_d^0\subseteq X_d$ with $\n\cdot(-\i \bn - \A)\varphi=0$}.
\end{lemma}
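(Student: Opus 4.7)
The plan is to derive, by one gauge-covariant integration by parts, the identity
\begin{equation*}
\igralnl{\Omega_d}{}{\conj{\varphi}\,(-\i\bn - \A)^2\psi}{\Omega}
= -\i\igralnl{\partial\Omega_d}{}{\conj{\varphi}\,\n\cdot(-\i\bn - \A)\psi}{S}
+ \igralnl{\Omega_d}{}{\conj{(-\i\bn - \A)\varphi}\cdot(-\i\bn - \A)\psi}{\Omega},
\end{equation*}
valid for all sufficiently smooth $\varphi,\psi\in X_d$, and then to invoke the defining boundary condition of $X_d^0$ twice (once for $\psi$, once for $\varphi$) to conclude self-adjointness.

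Concretely, I would split $(-\i\bn - \A)^2 = \sum_i(-\i\partial_i - A_i)^2$, introduce $\chi_i \dfn (-\i\partial_i - A_i)\psi$, and apply the classical scalar divergence identity $\int_{\Omega_d}\conj{\varphi}\,\partial_i\chi_i\,\mathrm{d}\Omega = \int_{\partial\Omega_d}\conj{\varphi}\,\chi_i n_i\,\mathrm{d}S - \int_{\Omega_d}(\partial_i\conj{\varphi})\,\chi_i\,\mathrm{d}\Omega$ componentwise. Combining this with the two elementary facts $\conj{-\i\partial_i\varphi}=\i\partial_i\conj{\varphi}$ and $\conj{-A_i\varphi}=-A_i\conj{\varphi}$ (the latter because $A_i\in\R$ by assumption), one obtains $\conj{(-\i\partial_i - A_i)\varphi}=(\i\partial_i - A_i)\conj{\varphi}$, which combines with the factor $-\i$ coming from the integration by parts to reassemble the covariant derivative acting on $\conj{\varphi}$. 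Summing over $i$ yields the displayed identity.

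Once the identity is in hand, self-adjointness on $X_d^0$ is immediate. For $\psi\in X_d^0$ the boundary term vanishes, so
\[
\left\langle\varphi,\Ken(\mu)\psi\right\rangle_{L^2_{\C}(\Omega_d)} = \igralnl{\Omega_d}{}{\conj{(-\i\bn - \A)\varphi}\cdot(-\i\bn - \A)\psi}{\Omega},
\]
a quadratic form manifestly invariant under $(\varphi,\psi)\mapsto(\psi,\varphi)$ followed by complex conjugation. Applying the identity again with the roles of $\varphi$ and $\psi$ interchanged (its boundary term vanishing because $\varphi\in X_d^0$) expresses $\conj{\left\langle\psi,\Ken(\mu)\varphi\right\rangle}_{L^2_{\C}(\Omega_d)}=\left\langle\Ken(\mu)\varphi,\psi\right\rangle_{L^2_{\C}(\Omega_d)}$ by the same quadratic form, which closes the argument.

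I expect the main obstacle to be purely bookkeeping: the sign and conjugation conventions for $-\i\partial_i$ require care, and it is precisely the combination of this sign with the realness of $\A$ that makes the formal adjoint of $(-\i\partial_i - A_i)$ be $(\i\partial_i - A_i)$, which after conjugation reproduces $(-\i\partial_i - A_i)$. The smoothness assumption $\A\in C^1_{\R^n}(\Omega)$ together with the regularity and trace theory for $X_d$ on the convex domain $\Omega_d$ noted just after~\eqref{eq:GL compact} guarantee that all boundary integrals are well-defined and that the integrations by parts are rigorous.
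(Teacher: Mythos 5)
Your proposal is correct and follows essentially the same route as the paper: the displayed gauge-covariant integration-by-parts identity you derive is exactly the identity~\eqref{eq:keo property} that the paper's proof simply cites from the literature, and the conclusion on $X_d^0$ via the vanishing boundary term is the intended argument. Your componentwise derivation, including the sign/conjugation bookkeeping that turns $(-\i\partial_i - A_i)$ into $(\i\partial_i - A_i)$ on the conjugated factor, fills in the step the paper leaves to the reference.
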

\begin{proof}
This result immediately derives from the fact that
\begin{equation}\label{eq:keo property}
\int_{\Omega} \overline{\psi} (-\i\bn-\A)^2\varphi \,\mathrm{d}\Omega
= \int_{\Omega} \overline{(-\i\bn-\A)\psi} (-\i\bn-\A)\varphi\,\mathrm{d}\Omega
\nxs{-\i\int_{\partial\Omega} \overline{\psi}\n\cdot(-\i\bn-\A)\varphi}
\end{equation}
for all $\psi\in L^2_{\C}(\Omega_d)$, $\varphi\in X_d$, see \cite{Avron1978}.
\end{proof}

\begin{corollary}\label{corollary:j self-adjoint}
  For any $\psi\in X_d$ and $\A(\mu)\in C^1_{\R^n}(\Omega)$, the Jacobian operator
  $\J(\psi;\mu)$ defined in \eqref{eq:jacobian} is linear and self-adjoint \nxs{over $X_d^0$} with
  respect to the inner product \eqref{eq:real inner product},
  $\langle\cdot,\cdot\rangle_{\R}$.
\end{corollary}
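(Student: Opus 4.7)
The plan is to apply Lemma~\ref{lemma: hilbert space adjointness} with the Hilbert space $H=L^2_{\C}(\Omega_d)$ and the operation $C:\psi\mapsto\conj{\psi}$ (pointwise complex conjugation). First, I would verify that $C$ satisfies the hypotheses of that lemma: $\R$-linearity is immediate, and the identity $\Re\langle\conj{x},y\rangle_{L^2_{\C}} = \Re\langle x,\conj{y}\rangle_{L^2_{\C}}$ follows from $\conj{\conj{x}y} = x\conj{y}$, so the real parts of the two integrands coincide. Then I would split the Jacobian as $\J(\psi;\mu)\varphi = \L_1\varphi + \L_2 C(\varphi)$ with
\[
\L_1\varphi \dfn \Ken(\mu)\varphi + (-1 + 2|\psi|^2)\varphi, \qquad \L_2\varphi \dfn \psi^2\varphi,
\]
so that $\R$-linearity of $\J(\psi;\mu)$ is clear and the lemma reduces the problem to identifying the $\C$-adjoints of $\L_1$ and $\L_2$ in $L^2_{\C}(\Omega_d)$.

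Next I would compute these two adjoints. For $\L_2$, multiplication by the fixed function $\psi^2\in L^2_{\C}$ has $\C$-adjoint equal to multiplication by $\conj{\psi^2}=\conj{\psi}^2$; hence $C(\L_2^\ast\varphi) = \conj{\conj{\psi}^2\varphi} = \psi^2\conj{\varphi}$, which reproduces the antilinear term of $\J(\psi;\mu)$. For $\L_1$, the potential $-1+2|\psi|^2$ is real-valued, so multiplication by it is $\C$-self-adjoint; the kinetic part $\Ken(\mu)$ is $\C$-self-adjoint on $X_d^0$ by Lemma~\ref{lemma: linear part adjoint}, since restricting to test functions satisfying $\n\cdot(-\i\bn-\A)\varphi=0$ kills the boundary term in~\eqref{eq:keo property}. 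Therefore $\L_1^\ast = \L_1$ on $X_d^0$, and Lemma~\ref{lemma: hilbert space adjointness} yields
\[
\J(\psi;\mu)^\ast \varphi = \L_1^\ast \varphi + C(\L_2^\ast \varphi) = \L_1\varphi + \psi^2\conj{\varphi} = \J(\psi;\mu)\varphi,
\]
where the adjoint is taken with respect to $\langle\cdot,\cdot\rangle_{\R}$.

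The only nontrivial step is the bookkeeping at the boundary: the factor $\L_1$ contains $\Ken(\mu)$, whose self-adjointness relies essentially on both arguments lying in $X_d^0$ so that the boundary integral in~\eqref{eq:keo property} vanishes. Once the domain is restricted to $X_d^0$ the multiplication operators are bounded and symmetric, so no further care is needed and the computation collapses to the algebraic identity just displayed.
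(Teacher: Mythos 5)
Your proposal is correct and follows essentially the same route as the paper: the same splitting of $\J(\psi;\mu)$ into the $\C$-linear part $(-\i\bn-\A)^2-1+2|\psi|^2$ and the antilinear multiplication by $\psi^2$, combined with Lemma~\ref{lemma: hilbert space adjointness} (with $C$ the pointwise conjugation) and Lemma~\ref{lemma: linear part adjoint} for the kinetic term on $X_d^0$. The extra care you take in verifying the hypotheses on $C$ and in noting that the boundary term in~\eqref{eq:keo property} vanishes on $X_d^0$ only makes explicit what the paper leaves implicit.
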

\begin{proof}
  By lemma~\ref{lemma: linear part adjoint}, the operator of
  $\J_1(\psi;\mu)\dfn\left( (-\i \bn - \A)^2 - 1 + 2|\psi|^2 \right)$
  defined over \nxs{$X_d^0$} with respect to the $L^2_{\C}(\Omega_d)$-inner product
  is self-adjoint.  It can easily be checked that the adjoint operator
  of $\J_2(\psi)$, defined by $\J_2(\psi)\varphi = \psi^2\varphi$ for
  $\varphi\in X_d$, is given by $\J^*_2(\psi)\varphi =
  \overline{\psi^2}\varphi$.  Also note that the complex conjugation
  fulfills the conditions (\ref{eq:theta}).  Application of
  lemma~\ref{lemma: hilbert space adjointness} then states that the
  adjoint of $\J(\psi;\mu):X_d\to Y_d$ is given by
\[
\J^*(\psi;\mu) \varphi =
\J^*_1\varphi + \overline{\overline{\psi^2}\varphi}
= \J_1\varphi + \psi^2\overline{\varphi}
= \J(\psi;\mu) \varphi
\]
for all $\varphi\in X_d$, and thus $\J^*(\psi;\mu) = \J(\psi;\mu)$.
\end{proof}

\section{Nullspace and regularization with a phase condition}\label{subsection:regularization}
As stated in the previous sections, our aim is to compute solutions to the
Ginzburg--Landau problem~\eqref{eq:GL compact} and continue
them in the parameter $\mu$. This can be done in principle by discretizing the
Ginzburg--Landau operator and applying standard numerical continuation techniques.

However, as we have seen in Section~\ref{sec:review}, the boundary-value problem
$\GL(\psi;\mu)=0$ is invariant under the actions of the group $\Gamma = \T
\times D_4$, and continuous symmetries (such as the phase symmetry determined by the circle group $\T$)
make the problem ill-posed. After discretization, this leads to numerical difficulties
that make it principally impossible to compute accurate approximations to the original
problem~\cite{demmel} (see Figure~\ref{fig:Newton without bordering}).

This problem is usually met in computations of relative equilibria, which are time-dependent solutions
whose temporal evolution is governed by a symmetry of the underlying differential
equations. Typical examples are traveling waves (translational symmetries) and
spiral waves (rotational symmetry).
The continuous symmetry induces a zero eigenvalue in the Jacobian associated with the
boundary-value problem, and it is therefore not straightforward to use Newton's method to compute
the desired pattern: each Newton iteration inverts the Jacobian
evaluated at a given solution $\psi$ and requires a regular linear operator.

A generic strategy to compute relative equilibria and to remove the
  singularity is to extend the boundary-value problem by introducing
an additional scalar unknown and closing the system by means of a
suitably-defined \emph{phase condition}
\cite{beyn2004,CS:2007:NCC,rowley2003,beyn2007}. The new
boundary-value problem is well-posed, and therefore Newton's method can
find the solution and path-follow it as a function of the
parameters.  This regularization technique can be applied to the
stationary patterns of the Ginzburg--Landau problem to factor out the
action of the circle group $\T$.
To the authors' knowledge, the removal of the singularity for the Ginzburg--Landau
system has not been considered in literature before.  The regularization adopted here
is an application of the framework proposed
in \cite{CS:2007:NCC}.

In order to regularize the Ginzburg--Landau problem, we look at the action of
$\xi_{\eta}\in\alg(\T)$, $\eta\in\R$, on a state $\psi$. Note that the exponential
map of $\T$ is given by $\exp: \alg(\T) \rightarrow \T$, $\xi_\eta \mapsto \theta_\eta$,
where $\theta_\eta$ is identified with the action $\theta_\eta \psi = \e^{\i\eta} \psi$
on a state $\psi$.
This yields
\[
\xi_\eta \psi
= \left.\frac{\text{d}}{\text{d}t} \exp(\xi_{\eta t})\psi\right|_{t=0}
= \left.\frac{\text{d}}{\text{d}t} \theta_{\eta t}\psi\right|_{t=0}
= \left.\frac{\text{d}}{\text{d}t} \e^{\i\eta t}\psi\right|_{t=0} =  \i \eta\,\psi,
\]
and indeed the function $\i\psi_{\text{s}}$ is
in the nullspace of the Jacobian for a solution $(\psi_{\text{s}},\mu_{\text{s}})$ of
(\ref{eq:GL compact}):
\begin{multline}\label{eq:jacobian nullspace}
\J(\psi_{\text{s}};\mu_{\text{s}}) (\i\psi_{\text{s}})
=  \left[ (-\i \bn - \A)^2  - 1 + 2|\psi_{\text{s}}|^2 \right] (\i\psi_{\text{s}}) - \i \psi_{\text{s}}^2  \conj{\psi_{\text{s}}}\\
=  \left(1-|\psi_{\text{s}}|^2\right)(\i\psi_{\text{s}})  - \i\psi_{\text{s}} + 2\i\conj{\psi_{\text{s}}}\psi_{\text{s}}^2  - \i \psi_{\text{s}}^2  \conj{\psi_{\text{s}}}
= 0.
\end{multline}

It is then possible to amend the Ginzburg--Landau problem and factor
out the action of $\T$.
To this end, for fixed $\mu$, we compute $(\psi,\eta)$ as a regular zero of the
extended operator
\[
\begin{split}
\GL_p \colon X_d \times \alg(\T) &\longrightarrow Y_d \times \R\\
                     (\psi,\eta) &\longmapsto ( \GL(\psi;\mu) - \xi_\eta \psi,\, \Phi(\psi -\psi_0) )
\end{split}
\]
where $\Phi \colon X_d \to \R$ is a suitable phase condition and
$\psi_0$ a given reference state.
In the Ginzburg--Landau setting, the natural choice is the functional
\begin{equation}\label{eq:phasecondition}
  \Phi \colon X_d \longrightarrow \R, \quad \psi \longmapsto \Re\langle \i \psi_0, \psi-\psi_0 \rangle
\end{equation}
with a given reference state $\psi_0\in L^2_{\C}(\Omega_d)$, subject to mild conditions
(see corollary~\ref{cor:nullspace reduction}). Hence, instead of
(\ref{eq:GL compact}), we will consider the extended problem
\begin{equation}\label{eq:extended GL}
0 = \GL_p(\psi,\eta; \mu)
\dfn
\begin{pmatrix}
\GL( \psi;  \mu ) - \i\eta \psi\\[0.5ex]
\Im(\left\langle\psi_0,\psi\right\rangle_{L^2_{\C}(\Omega_d)})
\end{pmatrix}.
\end{equation}
\begin{remark}
The phase condition featuring in~\eqref{eq:extended GL} is also a necessary condition for
\[
\min\limits_{\chi\in\R} \left\|\psi_0-\psi\,\e^{\i\chi}\right\|_{L^2_{\C}(\Omega_d)}^2.
\]
This selects, out of all physically equivalent candidate solution
states $\psi\,\e^{\i\chi}$, those two which are closest and furthest from $\psi_0$
in the $L^2_{\C}(\Omega_d)$-norm.
\end{remark}

If $\psi_\text{s}\in X_d$ is a solution of the original
equations~(\ref{eq:GL compact}), then $(\exp(\i\chi_s)\psi_\text{s},0)^\tp$
with $\chi_s\dfn -\arg(\langle\psi_0,\psi_{\text{s}}\rangle_{L^2_{\C}(\Omega_d)})$ is a
solution of~(\ref{eq:extended GL}) as well.
The Jacobian operator corresponding to the extended problem~(\ref{eq:extended GL}) is
\begin{equation}\label{eq:extended jacobian}
\begin{split}
&\J_p(\psi,\eta; \mu) : X_d\times\R \to Y_d\times\R,\\
&\J_p(\psi,\eta; \mu)\,
\begin{pmatrix}
\varphi\\
\nu
\end{pmatrix}
=
\begin{pmatrix}
(\J(\psi;\mu) - \i\eta)\varphi - \i \psi \nu\\[0.5ex]
\Im(\left\langle\psi_0,\varphi\right\rangle_{L^2_{\C}(\Omega_d)})
\end{pmatrix}.
\end{split}
\end{equation}
We expect that the dimension of the nullspace of the extended
Jacobian~(\ref{eq:extended jacobian}) is lower than the one of
$\mathcal{J}(\psi;\mu)$. This is guaranteed by Keller's bordering
lemma~\cite{Keller:1976:NSB} if $\dim \ker \mathcal{J}(\psi;\mu)=1$.
However, in the case of the Ginzburg--Landau operator we will encounter degeneracies
of higher order. In the appendix, we present a bordering lemma that can be applied in such
cases (Lemma~\ref{lemma:keller}) and that is used in the proof of the following
corollary.

\begin{corollary}\label{cor:nullspace reduction}
Let $(\psi_{\text{s}},\mu_{\text{s}})\in X_d\times\R$ be a solution of the
original Ginzburg--Landau equations~(\ref{eq:GL compact}) with $\psi_{\text{s}}\neq 0$, and let $\psi_0\in L^2_{\C}(\Omega_d)$
such that $\left\langle\psi_{0},\psi_{\text{s}}\right\rangle_{L^2_{\C}(\Omega_d)}\neq 0$.
Then
\[
\dim\ker\J_p(\psi_{\text{s}};\mu_{\text{s}}) < \dim\ker\J(\psi_{\text{s}};\mu_{\text{s}}).
\]
\end{corollary}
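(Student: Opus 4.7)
The plan is to recognise $\J_p(\psi_{\text{s}},0;\mu_{\text{s}})$ as a rank-one bordering of $\J(\psi_{\text{s}};\mu_{\text{s}})$ and then invoke the generalised Keller-type lemma from the appendix (Lemma~\ref{lemma:keller}). In block form the extended Jacobian at the solution acts as
\[
\J_p(\psi_{\text{s}},0;\mu_{\text{s}})\begin{pmatrix}\varphi\\ \nu\end{pmatrix}
= \begin{pmatrix} \J(\psi_{\text{s}};\mu_{\text{s}})\,\varphi - \i\psi_{\text{s}}\,\nu\\[0.5ex] \Im\left\langle\psi_0,\varphi\right\rangle_{L^2_{\C}(\Omega_d)}\end{pmatrix},
\]
so the extended operator is $\J(\psi_{\text{s}};\mu_{\text{s}})$ augmented with the column $-\i\psi_{\text{s}}\in X_d$ and the real-linear row functional $\varphi\mapsto\Im\langle\psi_0,\varphi\rangle_{L^2_{\C}(\Omega_d)}$.

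Two hypotheses of the bordering lemma must then be checked. The first is that $-\i\psi_{\text{s}}$ does not lie in $\range\J(\psi_{\text{s}};\mu_{\text{s}})$. This follows from corollary~\ref{corollary:j self-adjoint}: self-adjointness with respect to $\langle\cdot,\cdot\rangle_{\R}$ gives $\range\J(\psi_{\text{s}};\mu_{\text{s}})=(\ker\J(\psi_{\text{s}};\mu_{\text{s}}))^\perp$, while~\eqref{eq:jacobian nullspace} places $\i\psi_{\text{s}}$ in the kernel. Were $\i\psi_{\text{s}}$ also in the range, it would be orthogonal to itself, forcing $\|\psi_{\text{s}}\|_{L^2_{\C}(\Omega_d)}=0$, which contradicts $\psi_{\text{s}}\neq 0$.

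The second hypothesis is that the bordering functional is nontrivial on the known null vector $\i\psi_{\text{s}}$, and a short calculation reduces this to $\Re\langle\psi_0,\psi_{\text{s}}\rangle_{L^2_{\C}(\Omega_d)}\neq 0$. Since $(\e^{\i\chi}\psi_{\text{s}},\mu_{\text{s}})$ is a solution of~\eqref{eq:GL compact} for every $\chi\in\R$ by the phase symmetry~\eqref{eq:reduced gauge invariance}, the gauge choice $\chi_{\text{s}}\dfn-\arg\langle\psi_0,\psi_{\text{s}}\rangle_{L^2_{\C}(\Omega_d)}$ rotates $\psi_{\text{s}}$ to a physically equivalent state for which $\langle\psi_0,\psi_{\text{s}}\rangle_{L^2_{\C}(\Omega_d)}=|\langle\psi_0,\psi_{\text{s}}\rangle_{L^2_{\C}(\Omega_d)}|>0$, using the standing assumption that this pairing is nonzero. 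The conclusion of the corollary is invariant under such a phase rotation, so there is no loss of generality, and the second hypothesis is in force.

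Applying Lemma~\ref{lemma:keller} then delivers the strict drop $\dim\ker\J_p(\psi_{\text{s}},0;\mu_{\text{s}})<\dim\ker\J(\psi_{\text{s}};\mu_{\text{s}})$. The main obstacle lies not in the three steps above, which are brief, but in the shape of the appendix lemma itself: at bifurcation points of~\eqref{eq:GL compact} the kernel of $\J(\psi_{\text{s}};\mu_{\text{s}})$ can be multi-dimensional, so the classical Keller bordering lemma (which assumes a simple null space) is insufficient, and one genuinely needs the authors' generalisation to certify that a rank-one border decreases the kernel dimension by (at least) one regardless of the multiplicity of $\ker\J(\psi_{\text{s}};\mu_{\text{s}})$.
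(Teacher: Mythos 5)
Your overall route is exactly the paper's: view $\J_p$ as a bordering of $\J$ by the column $-\i\psi_{\text{s}}$ and the row functional $\varphi\mapsto\Im\langle\psi_0,\varphi\rangle_{L^2_{\C}(\Omega_d)}$, use the self-adjointness from Corollary~\ref{corollary:j self-adjoint} to conclude $-\i\psi_{\text{s}}\notin\range(\J(\psi_{\text{s}};\mu_{\text{s}}))$ via $\range(\J)=\ker(\J)^\bot$ and $\langle\i\psi_{\text{s}},\i\psi_{\text{s}}\rangle_{\R}=\norm{\psi_{\text{s}}}^2\neq0$, exhibit $\i\psi_{\text{s}}\in\ker\J$ on which the row functional should be nonzero, and invoke Lemma~\ref{lemma:keller}. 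The first two steps are verbatim the paper's argument, and you are right that the multi-dimensional kernel is why the generalised lemma (rather than Keller's original) is needed.

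The gap is in your treatment of the second hypothesis. You correctly compute that $\Im\langle\psi_0,\i\psi_{\text{s}}\rangle=\Re\langle\psi_0,\psi_{\text{s}}\rangle$, which the stated assumption $\langle\psi_0,\psi_{\text{s}}\rangle\neq0$ does not force to be nonzero; this is sharper than the paper's own proof, which silently writes $\Im\langle\psi_0,\i\psi_{\text{s}}\rangle=\langle\psi_0,\psi_{\text{s}}\rangle$. But your ``without loss of generality'' repair via the rotation $\psi_{\text{s}}\mapsto\e^{\i\chi_{\text{s}}}\psi_{\text{s}}$ does not close the gap: $\dim\ker\J$ is indeed invariant under the rotation (since $\J(\e^{\i\chi}\psi_{\text{s}};\mu_{\text{s}})=\theta_{\chi}\,\J(\psi_{\text{s}};\mu_{\text{s}})\,\theta_{-\chi}$), but $\dim\ker\J_p$ is not, because the phase-condition row is anchored to the fixed reference $\psi_0$ and is not $\T$-equivariant. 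Concretely, take $\psi_0=\i\psi_{\text{s}}$: then $\langle\psi_0,\psi_{\text{s}}\rangle=-\i\norm{\psi_{\text{s}}}^2\neq0$, yet the functional vanishes on all of $\spn_{\R}\{\i\psi_{\text{s}}\}$, so if that span exhausts $\ker\J$ one gets $\dim\ker\J_p=\dim\ker\J$ and no strict drop, even though the phase-aligned state does satisfy the conclusion. So the conclusion is \emph{not} invariant under the rotation, and your argument only proves the corollary for $\e^{\i\chi_{\text{s}}}\psi_{\text{s}}$. The honest fix is to strengthen the hypothesis to $\Re\langle\psi_0,\psi_{\text{s}}\rangle_{L^2_{\C}(\Omega_d)}\neq0$ --- which holds automatically for the solution $(\e^{\i\chi_{\text{s}}}\psi_{\text{s}},0)$ of the extended system~\eqref{eq:extended GL} that the method actually computes; with that hypothesis your argument and the paper's coincide.
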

\begin{proof}
  The corollary is a direct consequence of lemma~\ref{lemma:keller} (page~\pageref{lemma:keller})
  so it suffices here to verify that it can be applied on $\J_p(\psi_;\mu)$.
First note that the phase condition
$\Im(\left\langle\psi_0,\varphi\right\rangle_{L^2_{\C}(\Omega_d)})$
is a linear functional over the $\R$-vector space $X_d$. Furthermore, we
have by~(\ref{eq:jacobian nullspace}) that $\spn\{\i\psi_{\text{s}}\}\subseteq \ker\J(\psi_{\text{s}};\mu_{\text{s}})$.
Evaluating the phase condition at $\i\psi_{\text{s}}$ yields
\[
\Im\left(\left\langle\psi_0,\i\psi_{\text{s}}\right\rangle_{L^2_{\C}(\Omega_d)}\right)
= \left\langle\psi_0,\psi_{\text{s}}\right\rangle_{L^2_{\C}(\Omega_d)}
\neq 0
\]
by assumption. Moreover, corollary~\ref{corollary:j self-adjoint} states
that $\J(\psi_{\text{s}};\mu_{\text{s}})=\J^*(\psi_{\text{s}};\mu_{\text{s}})$ with respect to the inner product~(\ref{eq:real inner product}).
From this, it follows that
\[
\range(\J(\psi_{\text{s}};\mu_{\text{s}})) = \ker(\J^*(\psi_{\text{s}};\mu_{\text{s}}))^\bot = \ker(\J(\psi_{\text{s}};\mu_{\text{s}}))^\bot,
\]
so to show that $b=-\i\psi_{\text{s}}\notin\range(\J)$ is suffices to show that
$b$ is not orthogonal to all of $\ker(\J(\psi_{\text{s}};\mu_{\text{s}}))$
with respect to the inner product~(\ref{eq:real inner product}). This holds true 
for $\psi_{\text{s}}\neq 0$ since
\[
\left\langle \i\psi_{\text{s}}, \i\psi_{\text{s}} \right\rangle_{\R}
= \Re \left\langle \psi_{\text{s}},\psi_{\text{s}} \right\rangle_{L^2_{\C}(\Omega_d)}
= \left\|\psi_{\text{s}}\right\|_{L^2_{\C}(\Omega_d)}^2 \neq 0.
\]
Thus, all conditions of lemma~\ref{lemma:keller} are fulfilled and its application to
$\J_p(\psi;\mu)$ concludes the proof.
\end{proof}

In the remainder of this section, we show that the extended operator retains the
symmetries of the Ginzburg--Landau system.

\paragraph{Symmetries of the extended system}\label{sec:symmetries extended sys}
Given $\gamma\in\Gamma$, the symmetries $\tilde{\gamma}$ of the extended system are defined to act
\begin{equation}\label{eq:extended symmetries}
\tilde{\gamma}
\begin{pmatrix}
\psi\\
\eta
\end{pmatrix}
\dfn
\begin{pmatrix}
\gamma \psi\\
\eta
\end{pmatrix}.
\end{equation}
\begin{lemma}\label{lemma:extended symmetries}
The extended system (\ref{eq:extended GL}) is equivariant exactly
under all actions in $\Gamma$ that leave $\psi_0$ invariant, i.e.,
$\tilde{\Gamma}=\Sigma_{\psi_0}\cap\Gamma$.
\end{lemma}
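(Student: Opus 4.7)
The plan is to verify the equivariance identity $\GL_p(\tilde\gamma(\psi,\eta);\mu)=\tilde\gamma\,\GL_p(\psi,\eta;\mu)$ component by component, isolating the PDE piece $\GL(\psi;\mu)-\i\eta\psi$ from the phase-condition piece $\Im\langle\psi_0,\psi\rangle_{L^2_{\C}(\Omega_d)}$. The first component will turn out to place no condition on $\psi_0$, so that the whole ``exactly'' clause of the lemma is carried by the phase condition.

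First I would dispatch the PDE component. The bare operator $\GL(\cdot;\mu)$ is already known to be $\Gamma$-equivariant by the discussion at the end of Section~\ref{sec:review}, so only the drift term $-\i\eta\psi$ remains to be checked. For the complex-linear generators $\theta_\zeta$ and $\rho$ this is immediate, because multiplication by the real scalar $\eta$ and by $\i$ commutes with $\gamma$. For the anti-linear generator $\sigma$ the computation is more delicate: conjugation inserts a minus sign on $\i\eta$, but the same sign appears consistently on both sides of the equivariance identity and cancels. Either way, the first component is $\tilde\gamma$-equivariant for \emph{every} $\gamma\in\Gamma$, independently of $\psi_0$.

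I would then turn to the phase condition, rewritten in the real inner product of Section~\ref{sec:self adjointness} as $\Im\langle\psi_0,\psi\rangle_{L^2_{\C}(\Omega_d)}=\langle\i\psi_0,\psi\rangle_{\R}$. Equivariance of this scalar component under $\tilde\gamma$ amounts to the identity $\langle\i\psi_0,\gamma\psi\rangle_{\R}=\langle\i\psi_0,\psi\rangle_{\R}$ for all $\psi\in X_d$. Since every generator of $\Gamma$ acts as an $\R$-linear $L^2$-isometry, its $\R$-adjoint coincides with its inverse, so after adjoint-transfer the condition becomes $\langle\gamma^{-1}(\i\psi_0)-\i\psi_0,\psi\rangle_{\R}=0$ for all $\psi$. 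Non-degeneracy of $\langle\cdot,\cdot\rangle_{\R}$ on $X_d$ then forces $\gamma(\i\psi_0)=\i\psi_0$, which, when the (anti)linearity of $\gamma$ is unfolded, collapses to the stabilizer condition $\gamma\psi_0=\psi_0$, that is, $\gamma\in\Sigma_{\psi_0}$.

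Putting the two pieces together yields both directions of the ``exactly'' clause: every $\gamma\in\Sigma_{\psi_0}$ gives a $\tilde\gamma$-equivariance of $\GL_p$, and any $\gamma$ for which $\GL_p$ is $\tilde\gamma$-equivariant must stabilize $\psi_0$, so $\tilde\Gamma=\Sigma_{\psi_0}\cap\Gamma$. The main obstacle I expect is the bookkeeping around the anti-linear generator $\sigma$: complex conjugation interacts with the imaginary unit sitting both in the drift term $-\i\eta\psi$ and inside the phase condition, and one has to verify that the sign flips introduced in each place match consistently so that the characterization of $\tilde\Gamma$ lands on the genuine fixed-point condition $\gamma\psi_0=\psi_0$ rather than on a sign-twisted variant. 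The $\T$-part and the purely rotational $C_4$-part of $D_4$ are routine because those actions are complex-linear and preserve the imaginary unit.
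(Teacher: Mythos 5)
Your strategy for the phase condition---rewrite it as $\langle\i\psi_0,\psi\rangle_{\R}$, transfer $\gamma$ to the other slot using that each generator is an $\R$-linear $L^2$-isometry, and invoke non-degeneracy of the inner product---is sound and in fact tidier than the paper's argument, which tests the integral identity against a sequence of Dirac deltas to force pointwise equality of $\gamma^{-1}\conj{\psi}_0$ and $\conj{\psi}_0$. For the complex-linear generators ($\theta_\eta$ and $\rho$) your reduction $\gamma(\i\psi_0)=\i\psi_0\Leftrightarrow\gamma\psi_0=\psi_0$ is correct and delivers both directions of the ``exactly'' clause cleanly.

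The gap sits exactly where you predicted it: the conjugate reflection $\sigma$ of \eqref{eq:mirror}. Since $\sigma$ is antilinear, $\sigma(\i\psi_0)=-\i\,\sigma\psi_0$, so the fixed-point condition $\sigma(\i\psi_0)=\i\psi_0$ produced by your adjoint transfer unfolds to $\sigma\psi_0=-\psi_0$, \emph{not} to $\sigma\psi_0=\psi_0$; a direct computation confirms $\Im\langle\psi_0,\sigma\psi\rangle=-\Im\langle\sigma\psi_0,\psi\rangle$, so for a $\sigma$-invariant reference state (e.g.\ $\psi_0\equiv1$) the phase functional is \emph{anti}-invariant, $\Phi(\sigma\psi)=-\Phi(\psi)$. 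The same antilinearity spoils your first-component claim: with the action \eqref{eq:extended symmetries} one gets $\sigma(-\i\eta\psi)=+\i\eta\,\sigma\psi$ on the left of the equivariance identity but $-\i\eta\,\sigma\psi$ on the right, so the signs do not ``appear consistently on both sides'' and cancel unless $\eta=0$. The conclusion of the lemma survives, but only after one lets $\tilde{\sigma}$ act by $-1$ on the scalar output (and on $\eta$)---equivalently, after arguing that the zero set of $\GL_p$ is preserved rather than that the map is strictly equivariant under the trivial output action. This is what the paper's closing remark that ``the action of $\sigma$ changes the sign of the expression'' is covering; your proof needs to make that sign bookkeeping explicit rather than asserting that it resolves favorably.
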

\begin{proof}
We have to proof equivariance only for the generators $\tilde{\rho}, \tilde{\sigma}\in\widetilde{\Gamma}$.
For a given $\gamma\in\Sigma_{\psi_0}\cap\Gamma$, it has to be shown that
\[
\begin{pmatrix}
\gamma \left[\GL(\psi ) - \i\eta \psi \right]\\
\Im(\left\langle\psi_0,\psi\right\rangle_{L^2_{\C}(\Omega_d)})
\end{pmatrix}
=
\tilde{\gamma}\GL_p(\psi,\eta)
\stackrel{!}{=}
\GL_p( \tilde{\gamma}(\psi,\eta))
=
\begin{pmatrix}
 \GL(\gamma \psi ) - \i\eta (\gamma\psi) \\
\Im(\left\langle\psi_0,\gamma\psi\right\rangle_{L^2_{\C}(\Omega_d)})
\end{pmatrix},
\]
which holds obviously true for the first component, owing to the $\Gamma$-invariance
of $\GL$.
As for the second component, we have
\[
\Im(\left\langle\psi_0,\psi\right\rangle_{L^2_{\C}(\Omega_d)})
\stackrel{!}{=}
\Im(\left\langle\psi_0,\gamma\psi\right\rangle_{L^2_{\C}(\Omega_d)})
=
\Im\left( \int_\Omega \conj{\psi}_0 (\gamma\psi) \,\mathrm{d}\Omega \right).
\]
After a suitable change of variables, this is equivalent to show that 
\begin{equation}\label{eq:phase condition 2}
\Im(\left\langle\psi_0,\psi\right\rangle_{L^2_{\C}(\Omega_d)})
\stackrel{!}{=}
\Im\left( \int_\Omega (\gamma^{-1}\conj{\psi}_0) \psi \det \vartheta_{\gamma}
\,\mathrm{d}\Omega \right) \quad \forall\psi\in L^2_{\C}(\Omega)
\end{equation}
holds exactly for all $\gamma\in\Sigma_{\psi_0}\cap\Gamma$.

Firstly, let us show this equivalence for the cyclic subgroup $C_4 \preceq \Sigma_{\psi_0}$.
Given $\gamma\in C_4$ (and thus $\gamma\psi_0 = \psi_0$, $\det\vartheta_{\gamma}=1$),
equation~(\ref{eq:phase condition 2}) obviously holds true.
On the other hand, let us assume that equation~(\ref{eq:phase condition 2}) is valid
and let us take a sequence of Dirac-$\delta$ functions centered at $(x_0, y_0)$,
$\psi^{(l)}=\delta_{(x_0,y_0)}^{l}$.
Then
\begin{multline*}
\Im\left(\conj{\psi}_0(x_0,y_0)\right)
=
\lim\limits_{l\to\infty}\Im\left(\int_\Omega \conj{\psi}_0 \psi^{(l)}\,\mathrm{d}\Omega \right)
\stackrel{!}{=}\\
\lim\limits_{l\to\infty}\Im\left(\int_\Omega \gamma^{-1}\conj{\psi}_0 \psi^{(l)} \,\mathrm{d}\Omega\right)
=
\Im\left( (\gamma^{-1}\conj{\psi}_0\right)(x_0,y_0))
\end{multline*}
Since this can be done for any $(x_0,y_0)\in\Omega\cup\partial\Omega$,
we have $\Im(\psi_0)\in C_4$. The same result is obtained for
$\Re(\psi_0)$ by taking $\psi^{(l)}=\i \delta_{(x_0,y_0)}^{l}$.
We conclude that $\Sigma_{\psi_0}\supseteq C_4$ is also necessary for (\ref{eq:phase condition 2}) to hold.

The very same arguments can be applied to the conjugate reflection $\sigma$, noting
that $\det \vartheta_{\sigma}=-1$, and that the action of $\sigma$ changes the sign
of the expression.
\end{proof}

The choice of $\psi_0$ must hence be such that it eliminates the phase
invariance (according to corollary~\ref{cor:nullspace reduction}),
and that it preserves the other symmetries of the system (according to
lemma~\ref{lemma:extended symmetries}).
The first condition is equivalent to demanding
$0\neq\left\langle\psi_0,\psi_{\text{s}}\right\rangle$ which indeed is a rather mild
condition that will be fulfilled, for instance, by $\psi_0\equiv 1$ for most
scenarios considered later.
Note that it is also possible to update $\psi_0$ in each Newton step to the current
guess $\psi^{(k)}$: For a solution $\psi_\text{s}$, let $\psi_\text{s}=\psi^{(k)}+e^{(k)}$; we have 
\[
\left\langle \psi^{(k)},\psi_\text{s} \right\rangle = \|\psi_\text{s}\|^2_{L^2_{\C}(\Omega_d)} + \left\langle e^{(k)}, \psi  \right\rangle,
\]
which is guaranteed to be  nonzero for sufficiently small $e^{(k)}$ if $\psi_\text{s}\neq0$.
Note that intermediate Newton steps might not exactly preserve the symmetries
of the system, but the symmetry breaking is weak in the sense that symmetry
is preserved at convergence, and will not do harm \cite{Hoyle:2006:PF}.

\section{The discretized system}\label{sec:discretized}

An important property of the full Ginzburg--Landau equations is its \emph{gauge
invariance}, a generalization of the phase symmetry (\ref{eq:reduced gauge invariance}) to
the case where $\A$ is not fixed (see section 3.1 in \cite{DGP:1992:AAG}).
While the reduced invariance with fixed
$\A$ is preserved under all consistent
pointwise discretizations of the Ginzburg--Landau equations
\cite{Du:1998:DGI,DJ:2004:NSQ,KK:1995:VCT},
ordinary finite-difference discretizations lead to systems that are gauge invariant
only up to $O(h)$, where $h$ is the grid spacing.
It is thus customary to reformulate the equations using techniques from lattice gauge theory.
For the convenience of the reader, the new system will be presented in this
section. We also show that, for appropriate phase conditions,
all the symmetries are preserved in the extended discretized system.

\subsection{Formulation with link variables}
Let us consider the functions
\begin{align*}
U_x(x,y) &\dfn \exp\left( -\i \igralnl{x_0}{x}{A_x(\xi,y)}{\xi}  \right),\\
U_y(x,y) &\dfn \exp\left( -\i \igralnl{y_0}{y}{A_y(x,\nu)}{\nu}  \right),
\end{align*}
with arbitrary, fixed $x_0, y_0\in\R$.
It can be checked easily that
\[
\sum\limits_{\nu\in\{x,y\}}
- \conj{U_{\nu}(x,y)} \frac{\partial^2}{\partial {\nu}^2}(U_{\nu}\psi)
=
( -\i \bn - \A)^2 \psi - \i (\bn\cdot\A) \psi.
\]
The Ginzburg--Landau equations (\ref{eq:GL}) can then be written as
\begin{equation}\label{eq:GL link variables}
\begin{cases}
0 = - \displaystyle\sum\limits_{\nu\in\{x,y\}}
\conj{U_{\nu}\psi} \,\textstyle\frac{\partial^2}{\partial {\nu}^2}(U_{\nu}\psi) - \conj{\psi}\psi (1 - \conj{\psi} \psi) \quad \text{in } \Omega,\\[4.0ex]
0 = \n \cdot
\begin{pmatrix}
\conj{U_x\psi}\, \frac{\partial}{\partial x}(U_x\psi)\\[1ex]
\conj{U_y\psi}\, \frac{\partial}{\partial y}(U_y\psi)
\end{pmatrix} \quad \text{on } \partial\Omega.
\end{cases}
\end{equation}
$U_{\nu}$ only appears in the product $U_{\nu}\psi$ which guarantees
preservation of full gauge invariance \cite{DGP:1992:AAG}.

\subsubsection{Discretization}

Let $N>0$, for simplicity even, $h = d/N$, and let $\Omega_h=\{ h \cdot (i,j) \,|\,
-N/2\le i\le N/2, -N/2\le j\le N/2 \}$ be a uniform grid on $\Omega_d$.
Furthermore, let us consider the discretization $\psi^{(h)} \in X_h=\C^{(N+1)\times
(N+1)}$. The ordinary five-point discretization of (\ref{eq:GL link variables}) with
centered finite differences on the boundaries is given by
\begin{multline}\label{eq:GL discretized}
0
= \GL^{(h)}(\psi^{(h)};\mu)
\dfn \left(D_{xx} \psi^{(h)}\right)_{i,j} + \left(D_{yy} \psi^{(h)}\right)_{i,j} - \psi^{(h)}_{i,j} \left( 1- \conj{\psi^{(h)}_{i,j}} \psi^{(h)}_{i,j} \right)\\
\forall i,j\in\{-N/2,\dots,N/2\}
\end{multline}
where the finite-difference operator $D_{xx}$ is defined by
\begin{multline*}
\left(D_{xx} \psi^{(h)}\right)_{i,j}
\dfn\\
\begin{cases}
h^{-2}\left( \phantom{{}-2(U_{x}^{(h)})_{i+1,j} \psi^{(h)}_{i+1,j} +{}} 2 \psi^{(h)}_{i,j} - 2(U_{x}^{(h)})_{i-1,j}  \psi^{(h)}_{i-1,j}\right)&\text{for } i=N/2,\\[1ex]
h^{-2}\left( \phantom{2}-(U_{x}^{(h)})_{i+1,j} \psi^{(h)}_{i+1,j} + 2 \psi^{(h)}_{i,j} - (U_{x}^{(h)})_{i-1,j}  \psi^{(h)}_{i-1,j}\right)&\text{for } -\frac{N}{2}<i<\frac{N}{2},\\[1ex]
h^{-2}\left( {}-2(U_{x}^{(h)})_{i+1,j} \psi^{(h)}_{i+1,j} + 2 \psi^{(h)}_{i,j} \right)&\text{for } i=-N/2,
\end{cases}
\end{multline*}
and likewise for $D_{yy} \psi^{(h)}$, with unknown $\psi^{(h)}\in X_h$, where
\[
(U_x^{(h)})_{i\pm 1,j} = \exp\left( -\i I_{x_i}^{x_{i\pm 1}}(A_x(\cdot,y_j)) \right)  + O(h^p)
\]
with $I_{x_i}^{x_{i\pm1}}(A_x(\cdot,y_j))\approx\igralnl{x_i}{x_{i\pm1}}{A_x(\xi,y_j)}{\xi}$,
and likewise for $(U_y^{(h)})_{x,j\pm 1}$. The order $p$
of the approximation depends on the quadrature method in use.
It is easy to verify that the discretization (\ref{eq:GL discretized}) has order of consistency $\min\{2,p-2\}$.

The discrete Jacobian operator $\J^{(h)}$ is defined similarly as
\begin{equation}\label{eq:discrete jacobian}
\begin{split}
&\J^{(h)}(\psi^{(h)};\mu): X_h \to X_h,\\
&\left(\J^{(h)}(\psi^{(h)};\mu)\,\varphi^{(h)}\right)_{i,j}
\dfn \left(D_{xx} \psi^{(h)}\right)_{i,j} + \left(D_{yy} \psi^{(h)}\right)_{i,j} - \varphi^{(h)}_{i,j} + 2\left|\psi^{(h)}_{i,j}\right|^2 \varphi^{(h)}_{i,j}\\
&\phantom{\left(\J^{(h)}(\psi^{(h)};\mu)\varphi^{(h)}\right)_{i,j}}\relphantom{\dfn}
+  \left(\psi^{(h)}_{i,j}\right)^2 \conj{\varphi^{(h)}_{i,j}}
\end{split}
\end{equation}
and it is self-adjoint
with respect to the scalar product $\Re\langle\cdot,\cdot\rangle_{\C}$. This is
a consequence of lemma~\ref{lemma: hilbert space adjointness} upon realizing
that the operators $D_{xx}$ and $D_{yy}$ are both self-adjoint with
respect to the inner product $\langle\cdot,\cdot\rangle_{\C}$ in $\C^{(N+1)\times(N+1)}$.
A consequence of this is that all eigenvalues of the operator~(\ref{eq:discrete jacobian})
are real-valued.

\subsubsection{Symmetries of the discretized system}
Now that the structure of the discretized operator is described, we review
the symmetries of the associated boundary-value problem.
Many of the results in this section can be borrowed from
Section~\ref{subsection:regularization} on the continuous problem with little
modification. For example, the discretized system (\ref{eq:GL discretized})
is invariant under
\[
\left(\theta_{\eta}^{(h)}\psi^{(h)}\right)_{i,j} \dfn \exp(\i\eta)\psi^{(h)}_{i,j}
\]
pointwise for each $\eta\in[0,2\pi)$.
Let further the discrete symmetry operators $\rho$ and $\sigma$ be defined by
\begin{gather}\label{eq:discretized symmetry actions}
\left(\rho^{(h)}\psi^{(h)}\right)_{i,j} \dfn \psi^{(h)}_{j,-i},\qquad
\left(\sigma^{(h)}\psi^{(h)}\right)_{i,j} \dfn \conj{\psi^{(h)}_{-i,j}}.
\end{gather}
It can easily be shown that the discretized system (\ref{eq:GL discretized}) is invariant under these actions.

Just like in the continuous case (\ref{eq:jacobian nullspace}), the discrete phase invariance induces a nontrivial nullspace of $\J^{(h)}$:
\begin{equation}\label{eq:discretized nullspace}
\begin{split}
\left(\J^{(h)}(\psi^{(h)};\mu)\,\i\psi^{(h)}\right)_{i,j}
&= \i\left(D_{xx} \psi^{(h)}\right)_{i,j} + \i\left(D_{yy} \psi^{(h)}\right)_{i,j} - \i\psi^{(h)}_{i,j} + 2\i\left|\psi^{(h)}_{i,j}\right|^2 \psi^{(h)}_{i,j}\\
&\relphantom{=} - \i \left(\psi^{(h)}_{i,j}\right)^2 \conj{\psi^{(h)}_{i,j}}\\
&= \i \psi^{(h)}_{i,j} \left( 1- \conj{\psi^{(h)}_{i,j}} \psi^{(h)}_{i,j} \right)  - \i\psi^{(h)}_{i,j} + \i\left|\psi^{(h)}_{i,j}\right|^2 \psi^{(h)}_{i,j}\\
&= 0.
\end{split}
\end{equation}
and hence
$\i\psi^{(h)}_{i,j}\in\ker\J^{(h)}(\psi_{\text{s}}^{(h)}\rev{;}\mu_{\text{s}})$.
This makes it impossible to treat the system~\eqref{eq:GL discretized}
with generic linear solvers \emph{at} a solution
$\psi_{\text{s}}^{(h)}$, as the system is then exactly singular.
In the neighborhood of $\psi_{\text{s}}^{(h)}$, the
system will have a large condition number, making round-off errors
dominate the update term~\cite{demmel} which in turn flaws the next Newton
step. This phenomenon is illustrated in Figure~\ref{fig:Newton without
bordering}.

As suggested in section~\ref{subsection:regularization},
we will avoid the singularity of the Jacobian by using a phase condition,
which in its discretized form reads
\begin{equation}\label{eq:discretePhaseCondition}
I^{(h)}(\psi_0^{(h)},\psi^{(h)})
\dfn
\Im\left( \sum\limits_{i=-N/2}^{N/2}w_i h \sum\limits_{j=-N/2}^{N/2} w_j h  \: \conj{\left(\psi^{(h)}_{0}\right)_{i,j}} \psi^{(h)}_{i,j}\right),
\end{equation}
where $\psi_0^{(h)}\in X_h$ is a given reference state, and
\[
w_i =
\begin{cases}
1 \quad& \text{for } -N/2<i<N/2,\\
\frac{1}{2}\quad& \text{for } i\in\{-N/2,N/2\}.
\end{cases}
\]
The discretized version of the extended system is then
\begin{equation}\label{eq:discretized extended GL}
0 =
\GL^{(h)}_p(\psi^{(h)},\eta;\mu)
\dfn
\begin{pmatrix}
\GL^{(h)}(\psi^{(h)};\mu) - \i\eta\psi^{(h)}\\
I^{(h)}(\psi_0^{(h)},\psi^{(h)})
\end{pmatrix},
\end{equation}
and the symmetry operations for this extended system can be defined just
like in~(\ref{eq:extended symmetries}).

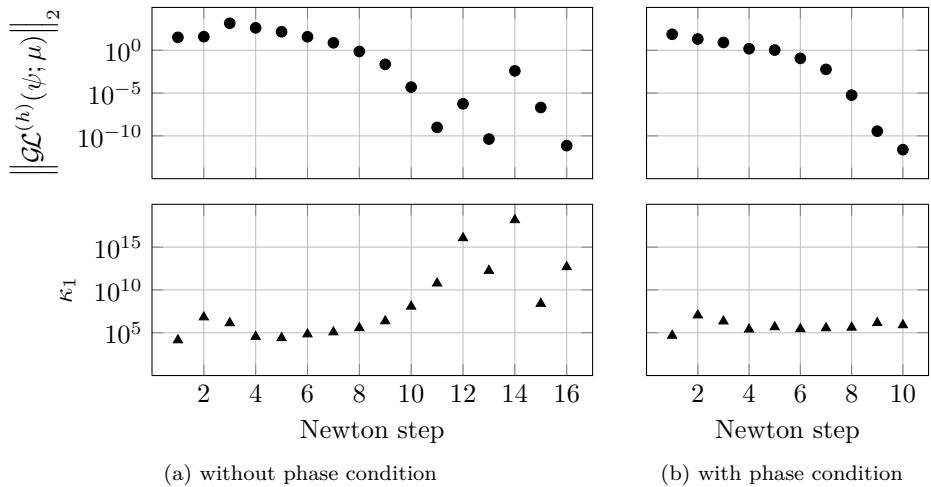
\begin{figure}
\centering

\subfloat[][without phase condition]{\begin{tikzpicture}

\begin{semilogyaxis}
[axis on top,
xtick={2,4,6,8,10,12,14,16},
xticklabels=\empty,
ytick={1e-10,1e-05,1},
xmin=0.000000e+00,xmax=1.700000e+01,
ymin=1.000000e-15,ymax=1.000000e+05,
xmajorgrids,
ymajorgrids,
ylabel={$\norm{\GL^{(h)}(\psi;\mu)}_2$},
width=\figurewidth,
height=0.7\figureheight,
scale only axis,
name=Fx
]

\addplot [only marks,mark=*,mark options={solid}] coordinates{
 (1.000000e+00,3.206000e+01) (2.000000e+00,3.860000e+01) (3.000000e+00,1.421000e+03) (4.000000e+00,4.143000e+02) (5.000000e+00,1.445000e+02) (6.000000e+00,3.775000e+01) (7.000000e+00,7.455000e+00) (8.000000e+00,7.228000e-01) (9.000000e+00,2.275000e-02) (1.000000e+01,4.953000e-05) (1.100000e+01,9.718000e-10) (1.200000e+01,5.534000e-07) (1.300000e+01,4.217000e-11) (1.400000e+01,3.930000e-03) (1.500000e+01,2.067000e-07) (1.600000e+01,7.231000e-12)
};

\end{semilogyaxis}

\path (Fx.below south west) ++(0,-0.1cm) coordinate (lower plot position);

\begin{semilogyaxis}[
at={(lower plot position)},
anchor=north west,
xtick={2,4,6,8,10,12,14,16},
ytick={1e+05,1e+10,1e+15},
xmin=0.000000e+00,xmax=1.700000e+01,
ymin=1.000000e+00,ymax=1.000000e+20,
xmajorgrids,ymajorgrids,
ylabel={$\kappa_1$},
width=\figurewidth,
height=0.7\figureheight,
scale only axis,
xlabel={\nxs{Newton step}}
]

\addplot [only marks,mark=triangle*,mark options={black,solid}] coordinates{
 (1.000000e+00,1.373000e+04) (2.000000e+00,6.555000e+06) (3.000000e+00,1.365000e+06) (4.000000e+00,3.484000e+04) (5.000000e+00,2.476000e+04) (6.000000e+00,6.740000e+04) (7.000000e+00,1.172000e+05) (8.000000e+00,3.646000e+05) (9.000000e+00,2.289000e+06) (1.000000e+01,1.174000e+08) (1.100000e+01,5.928000e+10) (1.200000e+01,1.163000e+16) (1.300000e+01,1.781000e+12) (1.400000e+01,1.481000e+18) (1.500000e+01,2.501000e+08) (1.600000e+01,4.803000e+12)
};

\end{semilogyaxis}

\end{tikzpicture}\label{fig:Newton without bordering}}
\quad
\subfloat[][with phase condition]{\begin{tikzpicture}

\begin{semilogyaxis}
[axis on top,
xtick={2,4,6,8,10,12,14,16},
xticklabels=\empty,
ytick={1e-10,1e-05,1},
yticklabels=\empty,
xmin=0.000000e+00,xmax=1.100000e+01,
ymin=1.000000e-15,ymax=1.000000e+05,
xmajorgrids,
ymajorgrids,
width=0.64\figurewidth,
height=0.7\figureheight,
scale only axis,
name=Fx
]

\addplot [only marks,mark=*,mark options={solid}] coordinates{
  (1.000000e+00,7.335000e+01) (2.000000e+00,2.060000e+01)
 (3.000000e+00,8.150000e+00) (4.000000e+00,1.499000e+00)
 (5.000000e+00,1.057000e+00) (6.000000e+00,1.125000e-01)
 (7.000000e+00,5.806000e-03) (8.000000e+00,5.687000e-06)
 (9.000000e+00,3.536000e-10) (1.000000e+01,2.450000e-12)
};

\end{semilogyaxis}

\path (Fx.below south west) ++(0,-0.1cm) coordinate (lower plot position);

\begin{semilogyaxis}[
at={(lower plot position)},
anchor=north west,
xtick={2,4,6,8,10,12,14,16},
ytick={1e+05,1e+10,1e+15},
yticklabels=\empty,
xmin=0.000000e+00,xmax=1.100000e+01,
ymin=1.000000e+00,ymax=1.000000e+20,
xmajorgrids,ymajorgrids,
width=0.64\figurewidth,
height=0.7\figureheight,
scale only axis,
xlabel={\nxs{Newton step}}
]

\addplot [only marks,mark=triangle*,mark options={solid,black}] coordinates{
 (1.000000e+00,4.677000e+04) (2.000000e+00,1.112000e+07) (3.000000e+00,2.185000e+06) (4.000000e+00,2.494000e+05) (5.000000e+00,4.575000e+05) (6.000000e+00,2.756000e+05) (7.000000e+00,3.510000e+05) (8.000000e+00,4.113000e+05) (9.000000e+00,1.422000e+06) (1.000000e+01,7.904000e+05)
};

\end{semilogyaxis}

\end{tikzpicture}\label{fig:Newton with bordering}}

\caption{The residual of the Newton iterations and the condition number
\nxs{$\kappa_1$} of the \nxs{associated} linear system in the $1$-norm.
(a) Here, the linear Jacobian systems are solved using Gaussian elimination and deliver
flawed Newton updates as the condition number increases. This is a principle
problem and it is not limited to Gaussian elimination.
(b) The regularization removes the singularity and this leads to bounded condition
numbers. The linear systems can then be solved accurately.}
\label{fig:Newton iterations}
\end{figure}

Parallel to~(\ref{eq:extended jacobian}), the discrete extended Jacobian operator $\J^{(h)}$ is
\begin{equation}\label{eq:discrete extended jacobian}
\begin{split}
&\J^{(h)}_p(\psi^{(h)};\eta;\mu): X_h\times\R \to X_h\times\R,\\
&\left(\J_p^{(h)}(\psi^{(h)};\eta;\mu)
\begin{pmatrix}
\varphi^{(h)}\\
\nu
\end{pmatrix}
\right)_{i,j}
\dfn
\begin{pmatrix}
(\J^{(h)}(\psi^{(h)};\mu)-\i\eta)\varphi^{(h)} - \i\psi^{(h)}\nu\\
I^{(h)}(\psi_0^{(h)},\varphi^{(h)})
\end{pmatrix}
\end{split}
\end{equation}

The following two statements are the discrete versions of the
lemmas~\ref{cor:nullspace reduction} and \ref{lemma:extended symmetries}.

\begin{corollary}
Let $(\psi_{\mathrm{s}}^{(h)},\mu_{\mathrm{s}})\in X_h\times\R$ be a solution of the
original discretized Ginzburg--Landau equations~(\ref{eq:GL discretized}) with $\psi_{\mathrm{s}}^{(h)}\neq 0$, and let $\psi_0^{(h)}\in X_h$
such that $\left\langle\psi_{0}^{(h)},\psi_{\mathrm{s}}^{(h)}\right\rangle_{\C}\neq 0$.
Then
\[
\dim\ker\J_p^{(h)}(\psi_{\mathrm{s}}^{(h)};\mu_{\mathrm{s}}) < \dim\ker\J^{(h)}(\psi_{\mathrm{s}}^{(h)};\mu_{\mathrm{s}}).
\]
\end{corollary}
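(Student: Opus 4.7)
The plan is to follow the continuous argument in Corollary~\ref{cor:nullspace reduction} essentially verbatim, transferring each ingredient to the discrete setting where all integrals are replaced by the trapezoidal sum featured in the discrete inner product, and then invoke the bordering lemma (Lemma~\ref{lemma:keller}) from the appendix.

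First I would verify the three structural hypotheses needed by Lemma~\ref{lemma:keller}. The phase functional $\varphi^{(h)}\mapsto I^{(h)}(\psi_0^{(h)},\varphi^{(h)})$ defined in~\eqref{eq:discretePhaseCondition} is $\R$-linear on $X_h$ (even though $X_h=\C^{(N+1)\times(N+1)}$ is a complex vector space, the use of $\Im(\cdot)$ makes it only $\R$-linear, which is exactly what the bordering lemma requires). Next, the discrete calculation~\eqref{eq:discretized nullspace} already supplies $\i\psi_{\mathrm{s}}^{(h)}\in\ker\J^{(h)}(\psi_{\mathrm{s}}^{(h)};\mu_{\mathrm{s}})$, so evaluating the phase functional on this vector gives
\[
I^{(h)}\bigl(\psi_0^{(h)}, \i\psi_{\mathrm{s}}^{(h)}\bigr)
= \Re\bigl\langle \psi_0^{(h)}, \psi_{\mathrm{s}}^{(h)} \bigr\rangle_{\C},
\]
where I use that $\Im(\i z)=\Re(z)$ pointwise; together with $\langle \psi_0^{(h)},\psi_{\mathrm{s}}^{(h)}\rangle_{\C}\neq 0$ this value is nonzero (after possibly replacing $\psi_0^{(h)}$ by $\i\psi_0^{(h)}$ should only the imaginary part vanish, or simply noting that the correct reading of the hypothesis is non-vanishing of the natural pairing that appears in the phase condition; in either reading the genericity is immediate).

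To identify the bordering vector $b$ from Lemma~\ref{lemma:keller}, I read off $b=-\i\psi_{\mathrm{s}}^{(h)}$ from the off-diagonal block of $\J_p^{(h)}$ in~\eqref{eq:discrete extended jacobian}. The key step is to show $b\notin\range\bigl(\J^{(h)}(\psi_{\mathrm{s}}^{(h)};\mu_{\mathrm{s}})\bigr)$. Since $\J^{(h)}$ is self-adjoint with respect to $\langle\cdot,\cdot\rangle_{\R}=\Re\langle\cdot,\cdot\rangle_{\C}$ (noted right after~\eqref{eq:discrete jacobian} as a consequence of Lemma~\ref{lemma: hilbert space adjointness}), we have
\[
\range\bigl(\J^{(h)}(\psi_{\mathrm{s}}^{(h)};\mu_{\mathrm{s}})\bigr) = \ker\bigl(\J^{(h)}(\psi_{\mathrm{s}}^{(h)};\mu_{\mathrm{s}})\bigr)^{\perp},
\]
where the orthogonality is with respect to $\langle\cdot,\cdot\rangle_{\R}$. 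Hence it is enough to exhibit a single element of $\ker\J^{(h)}$ to which $b$ is not orthogonal; the natural candidate is $\i\psi_{\mathrm{s}}^{(h)}$ itself, for which
\[
\bigl\langle -\i\psi_{\mathrm{s}}^{(h)},\, \i\psi_{\mathrm{s}}^{(h)}\bigr\rangle_{\R}
= -\Re\bigl\langle \psi_{\mathrm{s}}^{(h)},\psi_{\mathrm{s}}^{(h)}\bigr\rangle_{\C}
= -\bigl\|\psi_{\mathrm{s}}^{(h)}\bigr\|_{\C}^{2}\neq 0
\]
by the assumption $\psi_{\mathrm{s}}^{(h)}\neq 0$.

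With all hypotheses of Lemma~\ref{lemma:keller} verified for $\J_p^{(h)}(\psi_{\mathrm{s}}^{(h)};\mu_{\mathrm{s}})$, the conclusion of the lemma gives the strict inequality $\dim\ker\J_p^{(h)} < \dim\ker\J^{(h)}$ and the proof is complete. The main obstacle I anticipate is purely bookkeeping: making sure the discretized phase functional $I^{(h)}$, which is built from the trapezoidal sum, truly matches the $\R$-linear phase functional required by the generalized bordering lemma, and checking that no subtlety arises from the fact that the discrete inner product is genuinely complex while the self-adjointness invoked is with respect to the real inner product $\Re\langle\cdot,\cdot\rangle_{\C}$. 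All the relevant identities are pointwise analogues of those used in the continuous proof, so beyond this check no new ideas are needed.
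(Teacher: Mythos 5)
Your proposal is correct and follows essentially the same route as the paper, whose own proof consists of the single remark that the argument runs parallel to Corollary~\ref{cor:nullspace reduction} with the discrete inner product $\Re\langle\cdot,\cdot\rangle_{\C}$; you carry out exactly that transfer, checking $\R$-linearity of $I^{(h)}$, membership $\i\psi_{\mathrm{s}}^{(h)}\in\ker\J^{(h)}$ via~\eqref{eq:discretized nullspace}, and $b=-\i\psi_{\mathrm{s}}^{(h)}\notin\range(\J^{(h)})$ via discrete self-adjointness. Your parenthetical about $\Im(\i z)=\Re(z)$ turning the hypothesis into non-vanishing of $\Re\langle\psi_0^{(h)},\psi_{\mathrm{s}}^{(h)}\rangle_{\C}$ is in fact a more careful reading than the paper's own (which elides the same point in the continuous case), and does not change the argument.
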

\begin{proof}
The proof runs parallel to the one of corollary~\ref{cor:nullspace reduction},
using the discrete inner product $\Re\langle\cdot,\cdot\rangle_{\C}$.
\end{proof}

\begin{lemma}
The extended equations (\ref{eq:discretized extended GL}) are equivariant exactly under $\Sigma_{\psi_0^{(h)}}\cap\Gamma^{(h)}$.
\end{lemma}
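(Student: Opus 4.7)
The strategy is to mirror the proof of lemma~\ref{lemma:extended symmetries} almost verbatim in the discrete setting, since both the $\Gamma^{(h)}$-invariance of $\GL^{(h)}$ and the structure of the phase condition $I^{(h)}$ are exact discrete analogues of their continuous counterparts. It suffices to verify equivariance on the generators $\tilde\rho^{(h)}, \tilde\sigma^{(h)}$, and the term $-\i\eta\psi^{(h)}$ in the first component transforms consistently because $\eta$ is untouched by $\tilde\gamma$ and $\rho^{(h)}, \sigma^{(h)}$ satisfy $\gamma(\i\psi^{(h)}) = \pm\i(\gamma\psi^{(h)})$ (the sign matching the invariance already encoded in $\GL^{(h)}$). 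Thus the entire content of the lemma reduces to showing that for $\gamma\in\Gamma^{(h)}$,
\[
I^{(h)}\bigl(\psi_0^{(h)}, \gamma\psi^{(h)}\bigr) = I^{(h)}\bigl(\psi_0^{(h)}, \psi^{(h)}\bigr) \quad \forall \psi^{(h)} \in X_h
\]
holds if and only if $\gamma\in\Sigma_{\psi_0^{(h)}}$.

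For sufficiency, I would perform an index substitution inside the double sum defining $I^{(h)}$. For $\gamma=\rho^{(h)}$, the map $(i,j)\mapsto(j,-i)$ is a bijection of $\{-N/2,\dots,N/2\}^2$ that sends interior indices to interior indices and corner indices to corner indices, so the product weights $w_i w_j$ are preserved; using $\rho^{(h)}\psi_0^{(h)} = \psi_0^{(h)}$ one recovers the original sum. For $\gamma=\sigma^{(h)}$, the same works with $(i,j)\mapsto(-i,j)$, but the intrinsic complex conjugation of $\sigma^{(h)}$ combined with the conjugate appearing on $\psi_0^{(h)}$ in $I^{(h)}$ turns the summand into $\conj{\sigma^{(h)}\psi_0^{(h)}}\,\psi^{(h)}$; invoking $\sigma^{(h)}\psi_0^{(h)}=\psi_0^{(h)}$ closes the argument, and one must check carefully that the $\Im(\,\cdot\,)$ absorbs the sign coming from the reflection (this plays the role of $\det\vartheta_\sigma=-1$ in the continuous proof).

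For necessity, I would use the discrete analogue of the Dirac sequence: take $\psi^{(h)}$ to be the standard basis element $e_{i_0,j_0}$ (unit at the grid point $(i_0,j_0)$, zero elsewhere), and then $\i e_{i_0,j_0}$. Plugging these into the identity above extracts $\Im(\overline{\psi_0^{(h)}})_{i_0,j_0} = \Im(\overline{\gamma^{-1}\psi_0^{(h)}})_{i_0,j_0}$ and the analogous equality for the real part, simultaneously. Since this must hold for every $(i_0,j_0)$ in the grid, $\gamma\psi_0^{(h)}=\psi_0^{(h)}$ pointwise, i.e., $\gamma\in\Sigma_{\psi_0^{(h)}}$.

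The main technical point, and essentially the only place where the discrete proof diverges from the continuous one, is bookkeeping the trapezoidal weights $w_i$ under the action of the generators: one must verify that both $(i,j)\mapsto(j,-i)$ and $(i,j)\mapsto(-i,j)$ send boundary indices to boundary indices and interior to interior, so that $w_i w_j$ is invariant under the substitution. This is immediate from the symmetric index range $\{-N/2,\dots,N/2\}$, so no genuine obstacle arises; the bulk of the proof is the same algebraic manipulation as in lemma~\ref{lemma:extended symmetries}, with sums replacing integrals and basis vectors replacing approximate delta functions.
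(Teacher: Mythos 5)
Your proposal is correct and follows essentially the same route as the paper, which simply transfers the continuous argument of Lemma~\ref{lemma:extended symmetries} to the discrete setting by replacing the Dirac-$\delta$ sequences with the standard basis vectors $e_{i_0,j_0}$ and $\i\,e_{i_0,j_0}$. Your additional bookkeeping of the trapezoidal weights $w_i w_j$ under the index permutations is a worthwhile detail that the paper leaves implicit, but it does not change the substance of the argument.
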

\begin{proof}
Again, the proof is essentially parallel to the one of lemma~\ref{lemma:extended symmetries};
instead of series of Dirac-$\delta$ function $\delta_{(x_0,y_0)}$, we can use their discrete
equivalents
\[
\varphi_{i,j}^{(h)}
\dfn
\begin{Bmatrix}
1\:&\text{for } i=i_0, j=j_0\\
0\:&\text{otherwise}
\end{Bmatrix},
\quad
\widetilde{\varphi}_{i,j}^{(h)}
\dfn
\i\,\varphi_{i,j}^{(h)}.
\]
\end{proof}

\section{Numerical results}\label{sec:results}

Using the framework presented in the previous sections, it is possible to solve the
Ginzburg--Landau equations numerically for any given parameter $\mu$ (the strength
of the applied magnetic field) and $d$ (the edge length of the sample). 
As discussed in Section~\ref{sec:introduction}, the intensity of the applied magnetic
field, 
can be tuned experimentally, and it is thus interesting to explore the bifurcation
scenario as this parameter is varied. 

Because of the symmetries of the Ginzburg--Landau system posed on the square,
we expect symmetry-breaking bifurcations to arise. As described in Section
\ref{sec:symmetries extended sys}, the extended system \eqref{eq:extended GL} does not bear
the continuous $\T$-symmetry such that the relevant symmetry group for our
computations is \Dfour.

Symmetry-breaking bifurcations in \Dfour{} are well known \cite{Hoyle:2006:PF,golubitsky2002symmetry}.
We recall here that, in our case, the group generators are $\rho$, the rotation by
$\pi/2$ (see equation~\eqref{eq:rotation}), and $\sigma$, the conjugated mirroring
along the $y$-axis (see equation~\eqref{eq:mirror}). We expect that symmetry-breaking
bifurcations will occur when critical eigenvalues become unstable with (algebraic and
geometric) multiplicity either 1 or 2. With a simple unstable eigenvalue, one should
expect either a symmetry-preserving turning point or a pitchfork bifurcation with
branches corresponding to the four one-dimensional irreducible representations of
\Dfour. With an eigenvalue of multiplicity 2 crossing the origin, two families of
branches emerge from the bifurcation point, corresponding to the conjugacy classes of
the \Dfour{} isotropy subgroups $\langle \rho\rangle$ and $\langle
\sigma\rho\rangle$, respectively.

\begin{figure} 
  \centering
  \includegraphics[width=0.8\textwidth]{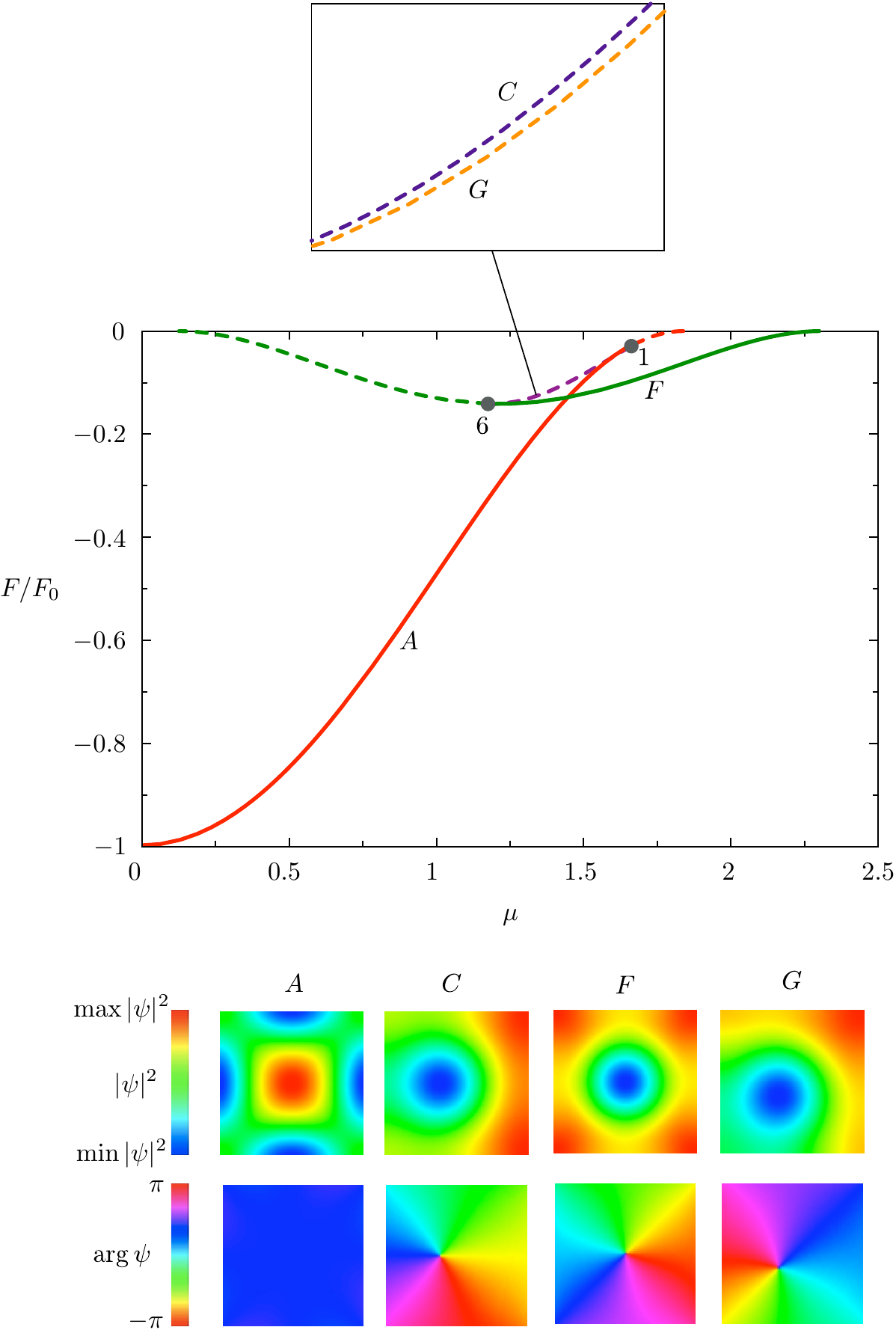}
  \caption{Free energy of the solutions as a function of the strength of the applied
  magnetic field $\mu$. Solid (dashed) lines represent stable (unstable) states.
  Branch $A$ with \Dfour{} symmetry starts at the homogeneous
  state with zero field and becomes unstable at bifurcation point 1. Branches $G$ and
  $C$ emerge from the bifurcation point, both characterized by a single vortex
  entering the domain; branch $C$ has mirror symmetry along either the horizontal or
  the vertical center line, while branch $G$ has mirror symmetry along one of the
  diagonals of the square domain. At bifurcation point $6$, branches $G$ and $C$
  connect to \nxs{$F$}, characterized by solutions with a single vortex in the center of
  the domain and full \Dfour{}-symmetry.} 
  \label{fig:solutionsmallbifurcationdiagram}
\end{figure}

\begin{figure}
  \centering
  \includegraphics[width=1.0\textwidth]{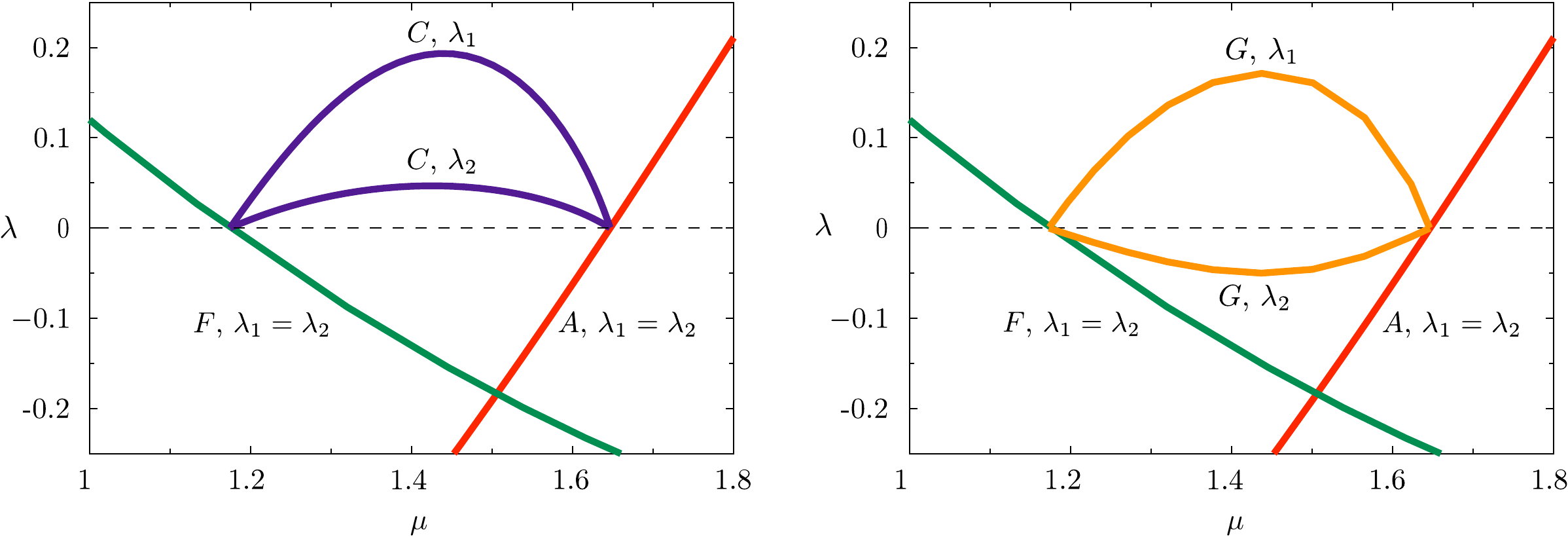}
  \caption{The two largest eigenvalues of the Jacobian as a function of
  the applied magnetic field. We plot the eigenvalues close to bifurcation points $1$
  and $6$, for each of the four solution branches in
  Figure~\ref{fig:solutionsmallbifurcationdiagram}. For field strength above
  $\mu=1.646$ the main branch is unstable, while for fields strengths weaker than
  $\mu=1.175$ the branch with a single vortex is unstable.  We see that the largest eigenvalue of the main
  branch, which has multiplicity two, splits into two separate eigenvalues.  Curve
  $C$ has two unstable eigenmodes, while $B$ has one stable and one unstable eigenmode.
  The colors reflect the branches in Figure~\ref{fig:solutionsmallbifurcationdiagram}.}
  \label{fig:solutionsmallbifurcationdiagrameigenvalues}
\end{figure}

In the present section, the parameter $\mu$ will be varied for two different domain
sizes $d$. The simplest nontrivial example of symmetry-breaking bifurcation occurs for
small domain sizes, so we have deliberately chosen $d=3.0$,
a domain size that is just enough to host a single vortex.
Subsequently, we study the case $d=5.5$, for which
the bifurcation scenario becomes increasingly more involved.

The bifurcation diagrams are traced via standard numerical continuation methods
\cite{krauskopf2007}. The technical implementation is based on the
Trilinos project~\cite{HW:2003:TUG}
and exploits the sparse structure of the
discrete Jacobian operator \eqref{eq:discrete jacobian} as well as its properties,
as outlined in Section~\ref{sec:self adjointness}.

In the remainder of this section, we will denote solution branches (and relative
patterns) alphabetically and bifurcation points with numerals.

\begin{remark}\label{remark:energy}
Unless otherwise stated, the bifurcation diagrams are plotted in terms of
the expression
\begin{equation}\label{eq:gibbs portion}
F(\psi,\mu) \dfn F_{\text{max}}^{-1}\, \xi\frac{|\alpha|^2}{\beta}
\int_{\Omega}
-|\psi|^2
+ \frac{1}{2}|\psi|^4
+ \left|-\i\bn\psi - \A(\mu)\psi \right|^2\,\mathrm{d}\Omega
\end{equation}
which is part of the Gibbs energy~(\ref{eq:Gibbs}).
This is in accordance to what is usually done in the physics literature.
Applying
(\ref{eq:keo property}) in the case $\psi\in X_d$, we obtain
\[
F(\psi,\mu) = F_{\text{max}}^{-1}\, \xi\frac{|\alpha|^2}{\beta}
\left[\int_{\Omega}
-|\psi|^2
+ \frac{1}{2}|\psi|^4
+ \int_{\Omega} \overline{\psi} (-\i\bn\psi - \A(\mu))^2\psi\,\mathrm{d}\Omega
\right].
\]
Only solutions $\psi(\mu)$
of the Ginzburg--Landau equations~(\ref{eq:GL}) are considered, so that 
\begin{multline*}
F(\psi,\mu) = F_{\text{max}}^{-1}\, \xi\frac{|\alpha|^2}{\beta}
\left[
\int_{\Omega}
-|\psi|^2
+ \frac{1}{2}|\psi|^4
+ \int_{\Omega} \overline{\psi} \psi(1-|\psi|^2)\,\mathrm{d}\Omega
\right]\\
=
- F_\text{max}^{-1}\, \xi\frac{|\alpha|^2}{2\beta}
\int_{\Omega}
|\psi|^4\,\mathrm{d}\Omega.
\end{multline*}
Thus, computing the significant portion~(\ref{eq:gibbs portion})
of the Gibbs energy~(\ref{eq:Gibbs}) effectively reduces to
evaluating
\[
F(\psi,\mu) = - |\Omega|^{-1} \int_{\Omega} |\psi|^4\,\mathrm{d}\Omega.
\]
\end{remark}

\subsection{Small-sized system ($d=3$)}\label{sec:smallsystem}
The first computed solution corresponds to
a superconductor in the absence of a magnetic field, that is,
$\mu=0$;
the system is in the homogeneous solution $\psi\equiv 1$ and
it is said to be in a completely superconducting state. The solution has
all the symmetries of the system (its isotropy subgroup is the full group
\Dfour) and is stable as a global minimum of the free energy
(\ref{eq:Gibbs}).

With the help of numerical continuation, a series of solutions for increasing $\mu$
is constructed. This results in branch $A$ in
Figures~\ref{fig:solutionsmallbifurcationdiagram} and
\ref{fig:solutionsmallbifurcationdiagrameigenvalues}, showing the energy of the
solution and the two most unstable eigenvalues of the Jacobian as a function of the
field strength, respectively. For non-zero field strength, the solutions deviate from
the homogeneous superconducting state, 
developing zones of low supercurrent density near the edges of the domain (see
pattern $A$ in Figure~\ref{fig:solutionsmallbifurcationdiagram}). As $\mu$ is increased,
the states are characterized by a higher energy and they maintain full
\Dfour{} symmetry.

At field strength $\mu\approx1.64$ (point $1$ in
Figure~\ref{fig:solutionsmallbifurcationdiagram}), an eigenvalue with
multiplicity 2 becomes unstable. At this bifurcation point, one can apply the
equivariant branching lemma: the Ginzburg--Landau equation is equivariant under the
symmetries of the finite group $D_4$ and the eigenvalues cross the origin with
non-zero speed, (see Figure~\ref{fig:solutionsmallbifurcationdiagrameigenvalues}).  
The lemma guarantees the existence of two solution branches
emerging from the bifurcation, corresponding to the conjugacy classes of the isotropy
subgroups $\langle \sigma \rangle$ and $\langle \sigma \rho \rangle$. They both have
a one-dimensional fixed-point subspace.
Hence, we expect two \emph{different} families of solution branches, each containing
four equivalent bifurcation curves with states belonging to one group orbit. The two
families are found in the branches $G$ and $C$ of
Figure~\ref{fig:solutionsmallbifurcationdiagram}.

Before describing curves $G$ and $C$, the two curves that emerge from the
bifurcation point, we continue
to follow the original branch $A$ for increasing $\mu$. The state is now unstable
and retains full $D_4$ symmetry. The magnetic field
penetration increases from the boundaries until, at $\mu\approx 1.89$, the
branch connects to the trivial state $\psi\equiv 0$, which corresponds to the normal
state of the sample.

We now discuss the curves $G$ and $C$, which have reduced symmetry and emerge from
the bifurcation points $1$ and $6$. Curve $C$ corresponds to solutions in which a
single vortex moves in from one of the four sides of the square.  These solutions
belong to the conjugacy class of the subgroup $\langle\sigma\rangle$ and the single
vortex sits either on the horizontally or vertically centered line.

The other family of solutions, on branch $G$, also features a single vortex entering the
system, but along one of the diagonals. These solutions have an isotropy subgroup
that belongs to the conjugacy class of $\langle \sigma\rho \rangle$, hence their
symmetry with respect to one of the diagonals.

Solutions belonging to curves $G$ and $C$ are energetically similar, the latter
having slightly higher energy, as it can be seen from the inset of
Figure~\ref{fig:solutionsmallbifurcationdiagram}.

As we decrease the field strength from point $1$ to point $6$, the vortex moves along
the center line for curve $C$, or along the diagonal for curve $G$, towards the center of
the sample. At field strength $\mu\approx 1.18$ (point $6$ in
Figure~\ref{fig:solutionsmallbifurcationdiagram}), each solution features a vortex in
the middle of the domain and enjoys full \Dfour{}-symmetry. As we can see in
Figures~\ref{fig:solutionsmallbifurcationdiagram} and
\ref{fig:solutionsmallbifurcationdiagrameigenvalues}, bifurcation point $6$ is
analogous to bifurcation point $1$, but it involves branch \nxs{$F$} instead of $A$.

The solution curve \nxs{$F$} in Figure~\ref{fig:solutionsmallbifurcationdiagram} is
characterized by a single vortex in the middle of the domain and is unstable for
field strength weaker than $\mu\approx 1.18$. This solution branch extends all the
way up to field strength $\mu\approx 2.30$ where it connects to the trivial
zero solution.

In a physical experiment where the magnetic field is first increased and then decreased,
we would expect to observe hysteresis: while increasing, the system would initially follow
branch $A$, switching to \nxs{$F$} at point $1$; conversely, for decreasing $\mu$, we would
pass from branch \nxs{$F$} to $A$, at point $6$. Hysteresis effects such as this one have
been discussed in \cite{aftalion2002bifurcation}, and observed experimentally in many
superconducting systems (see also Figure~\ref{fig:experimentalCascade}).

\begin{figure} 
\centering
\includegraphics{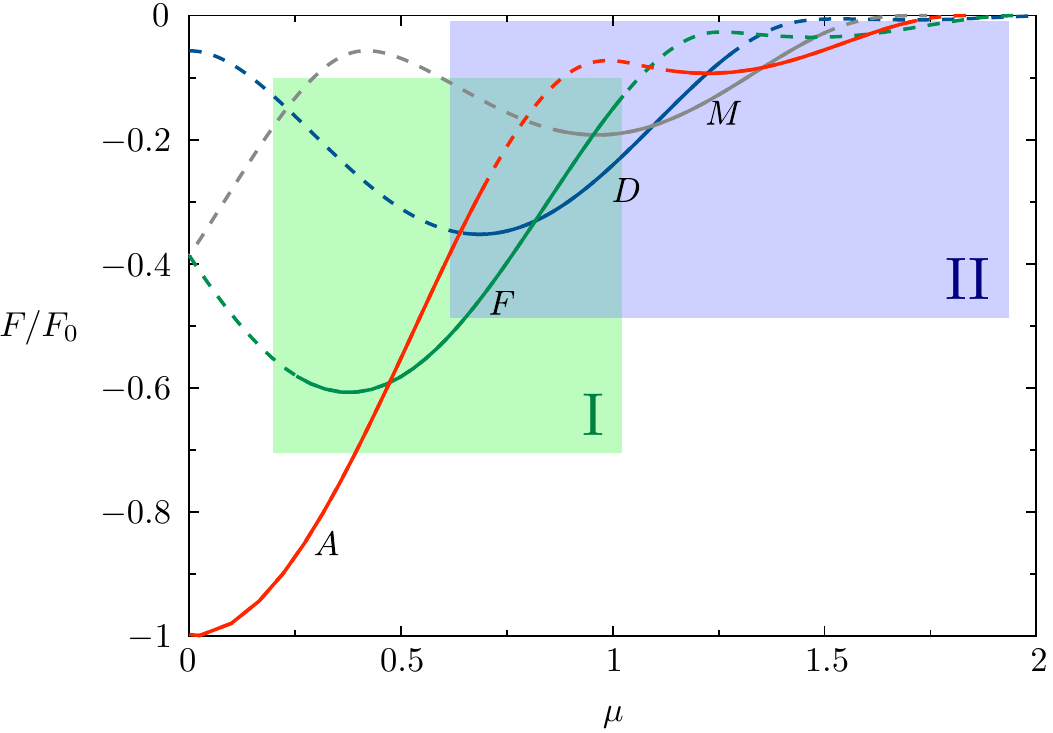}
\caption{The four main branches found for $d=5.5$. The corresponding stable
patterns with one vortex in the middle of the domain are presented in Figure~\ref{fig:example solutions}.
Solid (dashed) lines represent stable (unstable) states.
 Shaded areas are detailed in Figures~\ref{fig:zoneI} and \ref{fig:zoneII}.}
\label{fig:scaling5Cascade}
\end{figure}

\begin{figure}
  \centering
  \includegraphics[height=0.8\textheight]{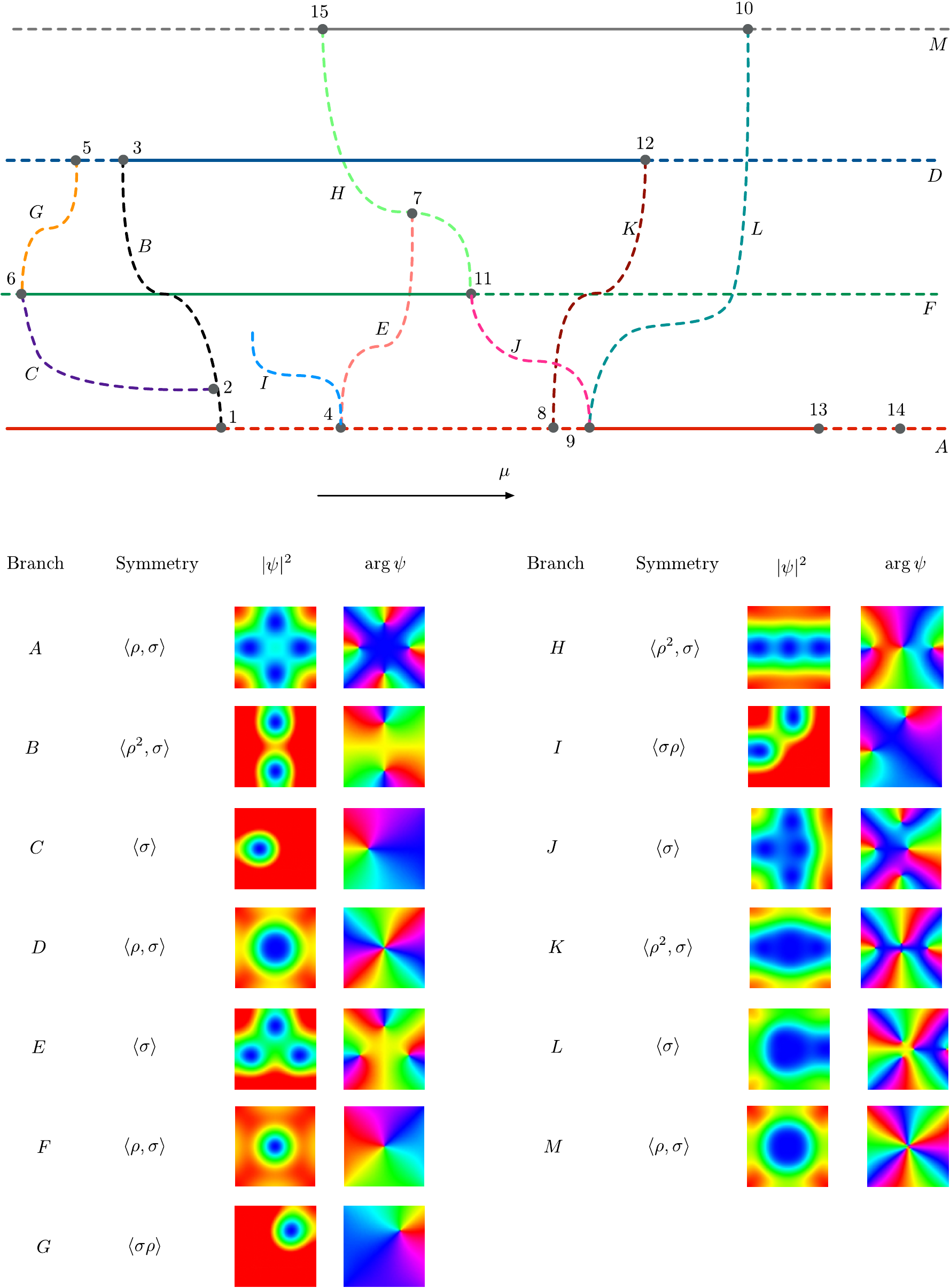}
  \caption{Schematic of solution branches, bifurcations, and patterns found for
  $d=5.5$.}
\label{fig:atlas}
\end{figure}

\subsection{Larger domain size ($d=5.5$)} \label{sec:intermediate}

In this section, we repeat the numerical experiment of Section~\ref{sec:smallsystem}
for a larger sample. In this context, it will be interesting to observe how the
states of branch $A$ destabilize: with edge~length~$d=5.5$, more vortices can
enter the domain, leading to a much more complicated  bifurcation diagram.

Before starting to describe all the branches found by means of numerical continuation, we
anticipate that we found four main branches, as opposed to
the case $d=3.0$, where we computed only two. The four main branches are
collected in Figure~\ref{fig:scaling5Cascade}: they are labeled $F$, $D$, $M$, $A$,
corresponding to states with vorticities $1$, $2$, $3$ and $4$, respectively.
Their stable segments, together with a
few corresponding patterns, have previously been sketched in
Figure~\ref{fig:example solutions}.

In the remainder of this section, we will
concentrate on the two shaded areas (zone I and II) of
Figure~\ref{fig:scaling5Cascade}. In these regions, a series of
symmetry-breaking bifurcations and cross-connecting branches are found.

As in the previous section, we start from the trivial homogeneous state $\psi\equiv 1$
  at $\mu=0$, and increase $\mu$. The resulting solution branch, enjoying full
\Dfour{} symmetry, is labeled $A$ and features four vortices entering the domain
from the sides, similarly to what happens for $d=3.0$. While this scenario resembles
the one described in Section~\ref{sec:smallsystem}, the bifurcations occurring in
zones I and II are quite different from the small-sized case, and we discuss them
one by one
in the remainder of this section. 
We refer the reader to the schematic in Figure~\ref{fig:atlas}, where we present
all the branches, bifurcations, and representative patterns computed for $d=5.5$.

\begin{figure}
\centering
\includegraphics[height=0.8\textheight]{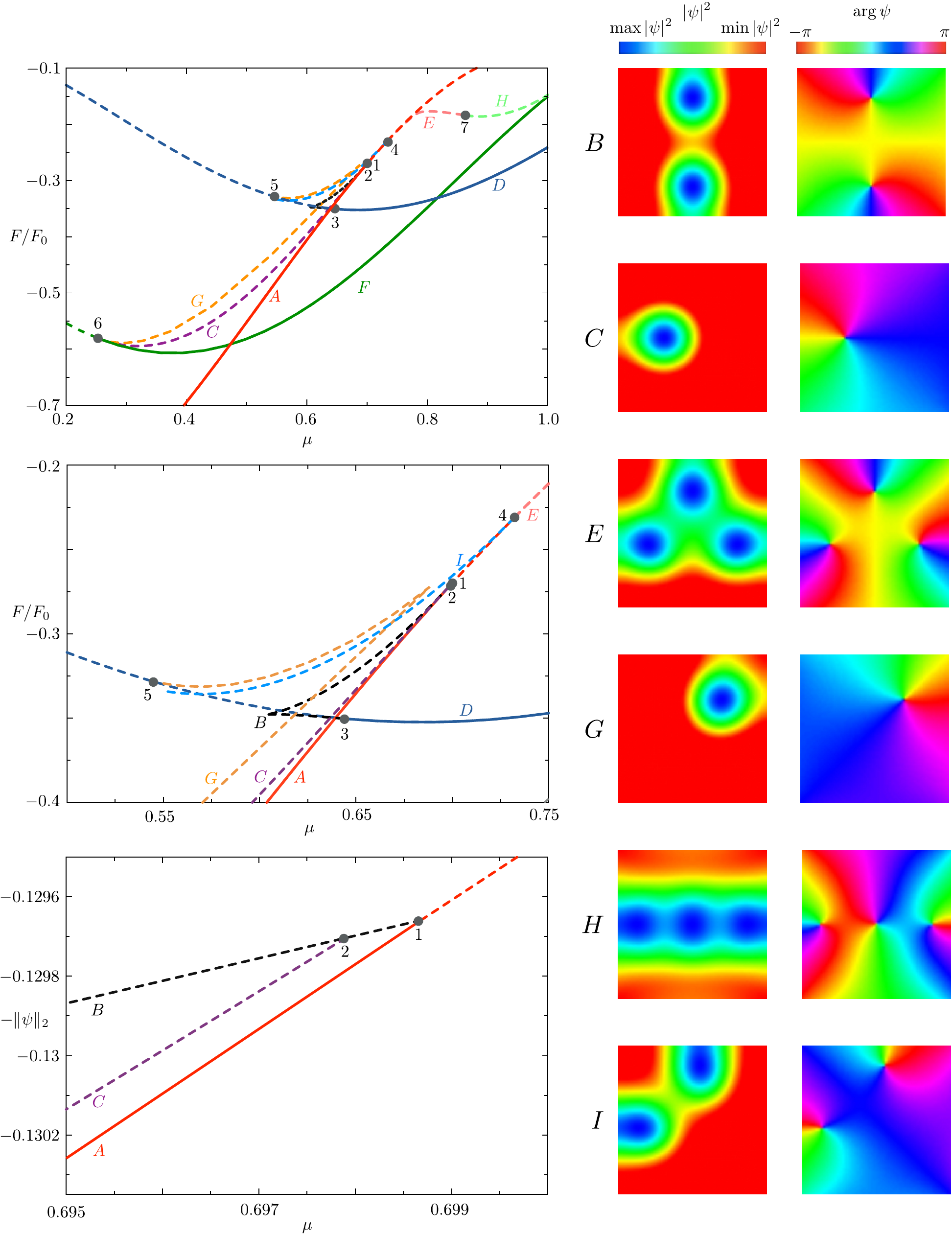}
\caption{Bifurcation diagrams and representative patterns found in zone I of
Figure~\ref{fig:scaling5Cascade}, in the case $d=5.5$. Top and middle panels: free
energy versus magnetic field intensity. Bottom panel: the negative norm of the
solution is used in the bifurcation diagram, in order to separate points $1$ and
$2$. Solid (dashed) lines represent stable (unstable) states.}
\label{fig:zoneI}
\end{figure}
\subsubsection{Zone I}\label{sec:zoneI} Branch $A$ in zone I destabilizes
with a \emph{simple} eigenvalue, at field strength $\mu \approx
0.70$ (see point $1$ in Figure~\ref{fig:zoneI}).  This mechanism is different from
what we found the small-sized system, where an eigenvalue with
multiplicity $2$ becomes unstable.  %
We can still apply the equivariant branching lemma: we expect a single family of
solutions bifurcating from point $1$, corresponding to a one-dimensional
irreducible representation of \Dfour{} \cite{Hoyle:2006:PF}.

The corresponding branch is labeled $B$ in Figure~\ref{fig:zoneI}. It belongs to the conjugacy
class of the isotropy subgroup $\langle \rho^2, \sigma \rangle$, representing the
mirror symmetries along horizontal and vertical center lines. When we follow this branch for
decreasing values of $\mu$, two vortices move simultaneously into the domain from
opposite edges (left-right or top-bottom).

Along branch $B$, we find another symmetry-breaking bifurcation, point $2$, where a
second simple eigenvalue becomes unstable. This is shown in detail in the bottom
panel of Figure~\ref{fig:zoneI}, where we plot the negative value of the
$L_2(\Omega_h)$-norm in order to
visualize the branches better. Branch $C$, emerging from point $2$, has further
reduced symmetry, corresponding to the conjugacy class $\langle \sigma \rangle$,
that is, a family of branches with a single vortex on one of the center lines, away
from the center.

On branch $C$, the vortex moves towards the middle of the sample and is connected via
point $6$, at $\mu\approx 0.25$, to branch $F$, the second main branch with full
\Dfour{} symmetry.  A single vortex sits in the center of the domain throughout
branch $F$ and solutions on $F$ are unstable for fields weaker than $\mu\approx
0.25$. This branch is similar to branch $F$ in the small system described in the
previous section.

Bifurcation point $6$ features a null eigenvalue with multiplicity $2$ and has the
same symmetry properties as the bifurcation points discussed in
Section~\ref{sec:smallsystem}. There, eigenvalues with multiplicity $2$ became
unstable on a branch with \Dfour{} symmetry and two branches emerged with with
symmetries $\langle\sigma\rangle$ and $\langle \sigma\rho \rangle$ (see also
Figure~\ref{fig:solutionsmallbifurcationdiagram}). In the current system, it has
already been found that branch $C$ with symmetry $\langle \sigma \rangle$ connects to
point $6$, and a second branch with symmetry $\langle \sigma\rho \rangle$ is to be
expected.  This branch has a single vortex on one of the diagonals and is shown as
curve $G$ in Figure~\ref{fig:zoneI}. In contrast to the small size system, this curve
does \emph{not} connect to bifurcation point~$1$. Instead, it connects to curve $D$
via bifurcation point $5$.

A branch for which there is no equivalent in the smaller system is branch~$D$
in Figure~\ref{fig:zoneI}, with a single vortex with multiplicity two (and hence
phase change of $2\times2\pi$, a so-called \emph{giant vortex}), in the middle of
the domain. Branch $D$ has full \Dfour{} symmetry and is only stable for fields
larger than $\mu\approx 0.64$. The corresponding bifurcation is marked by point $3$
in Figure~\ref{fig:zoneI} and connects to branch $B$ (see above).

At point $3$, the two vortices of $B$ merge into the giant vortex; similarly,
branch $G$, which emerges from point $6$ on branch $F$, connects to branch point $5$ on
branch $D$.

In the remaining part of zone I, we found that the main branch $A$ has another
instability, at bifurcation point $4$. This bifurcation features a critical
eigenvalue with multiplicity $2$ and thus two families of solution branches emerge.
Along branch $E$, three vortices enter the domain from three of the four sides of the
domain. This branch corresponds to the conjugacy class of the subgroup $\langle
\sigma \rangle$. The three vortices move towards the center of the system along the
branch where they finally merge into a giant vortex with multiplicity $3$ at point
$7$, connecting to branch $H$.

To conclude our exploration of zone I, we examined branch $I$, emerging from point~$4$
on the main branch $A$, for decreasing values of $\mu$.  Patterns on this branch have
two vortices entering from two adjacent edges of the system. This branch is symmetric
under reflections over one of the diagonals and corresponds to the conjugacy class of
the subgroup $\langle \sigma \rho \rangle$.

\begin{figure}
\centering
\includegraphics[height=0.7\textheight]{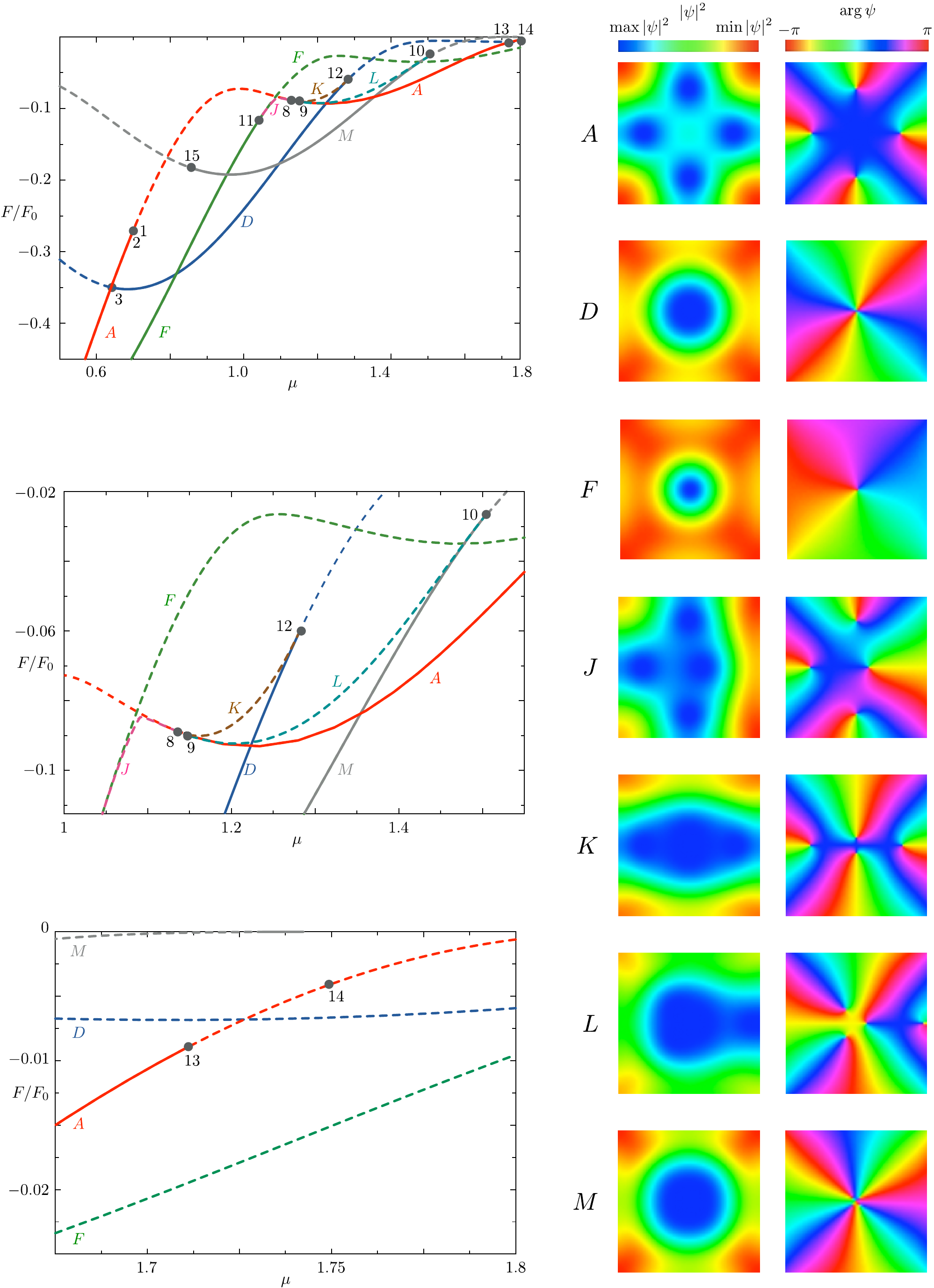}
\caption{Bifurcation diagrams and representative patterns found in zone II of
Figure~\ref{fig:scaling5Cascade}, in the case $d=5.5$. Solid (dashed) lines represent
stable (unstable) states.}
\label{fig:zoneII}
\end{figure}

\subsubsection{Zone II} \label{sec:zoneII} We now move to the upper part of the
bifurcation diagram in Figure~\ref{fig:solutionsmallbifurcationdiagram}.
An important difference from the small-sized system
$d=3$ is that the main branch $A$ restabilizes as the field increases, as shown
Figure~\ref{fig:zoneII}.

As we increase $\mu$ along the main branch $A$, four vortices are moving in from the
midpoints of the edges towards the center; the solutions maintain full \Dfour{}
symmetry. At field strength $\mu\approx 1.07$, the four vortices arrive at
the center and form a giant vortex with multiplicity $4$.
As the field strengthens further, this giant
vortex breaks up again and four separate vortices move away from the center
along the \emph{diagonals}. Note that there is no bifurcation point associated
with this reorganization as none of the eigenvalues of the Jacobian crosses the
origin.

At field strength $\mu\approx 1.14$, one of the unstable eigenvalues of bifurcation
point $1$ restabilizes. This yields bifurcation point $8$ in
Figure~\ref{fig:zoneII}.
From $8$, branch $K$ emerges and connects to branch $D$, with a vortex
of multiplicity $2$ in the center of the domain. Along branch $K$, two of the four
vortices are pushed out of the sample along one of the center lines, while the two remaining reorganize
into a giant vortex of multiplicity $2$ (point 12). A sequence of patterns of branch $K$ can
be found in Figure~\ref{fig:ksequence}.

Branch $A$ restabilizes at field strength $\mu\approx 1.15$. The pattern with four
symmetric vortices on the diagonals is now stable. The bifurcation point that marks
this transition is labeled as point $9$ in Figure~\ref{fig:zoneII}. 
Two solution curves emerge from point $9$, namely branches $L$ and $J$.

Branch $L$, connecting to branch $M$ via point $10$, features five vortices, as can
be seen in Figure~\ref{fig:lsequence}: four vortices arranged symmetrically, rather
close to the center, and a single antivortex at the center of the domain, so
that the total vorticity of the configuration is $3$. A giant vortex of multiplicity
$3$ is formed at bifurcation point $10$ on branch $M$, where it is
unstable. The fact that the vortices do not arrange as a giant vortex with vorticity
$3$ in a stable fashion has been predicted in~\cite{chibotaru2000symmetry}. Solutions
on $M$ are unstable for weak fields strengths (see bifurcation $15$ in
Figure~\ref{fig:zoneII}).

In a similar way, branch $J$ starts at point 9 and connects to point 11 on branch $F$
for decreasing $\mu$. The patterns along this branch are shown in
the sequence of snapshots in Figure~\ref{fig:jsequence}.

At field strength $\mu\approx 1.50$, the main branch $A$ loses its
stability again at point $13$ in a scenario similar to the small-sized system discussed in
Section~\ref{sec:smallsystem}. The eigenvalues of the Jacobian at this bifurcation point are
degenerate and two branches emerge, each of which has a single vortex
entering either along the diagonals or along the center lines. These
branches connect to a stable branch with five vortices
organized like the five dots on a dice.
This branch has been omitted in the figures. Further on the
main branch, a second simple eigenvalue becomes unstable at point $14$.

\begin{figure}
  \centering
  \includegraphics[width=1.0\textwidth]{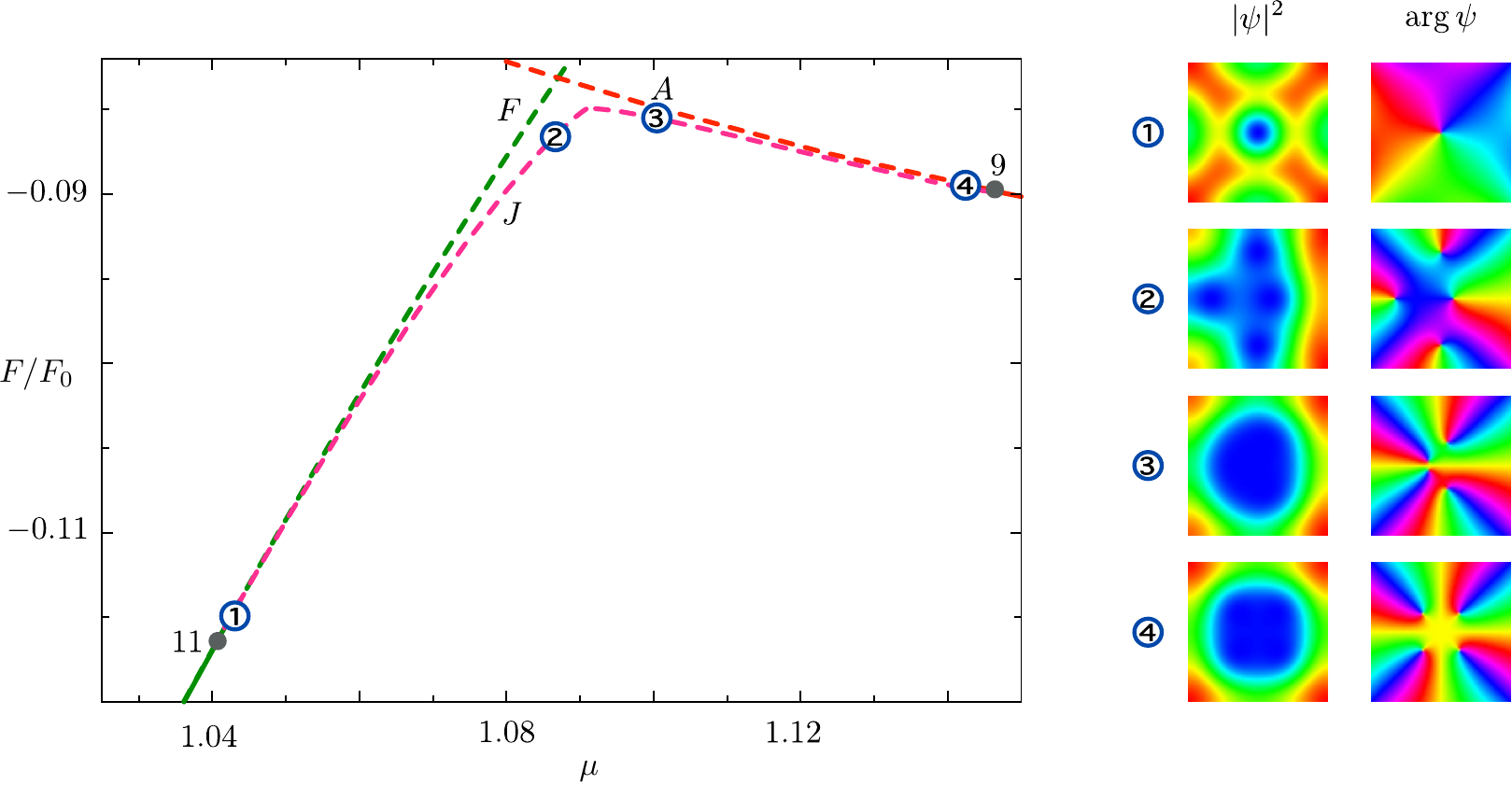}
  \caption{Patterns of branch $J$ in zone II (see Figure~\ref{fig:zoneII}). 
Branch $J$ bifurcates from branch $F$, which  has a single vortex with multiplicity
1,
  and connects to branch $A$ at bifurcation
  point $9$.}
  \label{fig:jsequence}
\end{figure}

\begin{figure}
  \centering
  \includegraphics[width=1.0\textwidth]{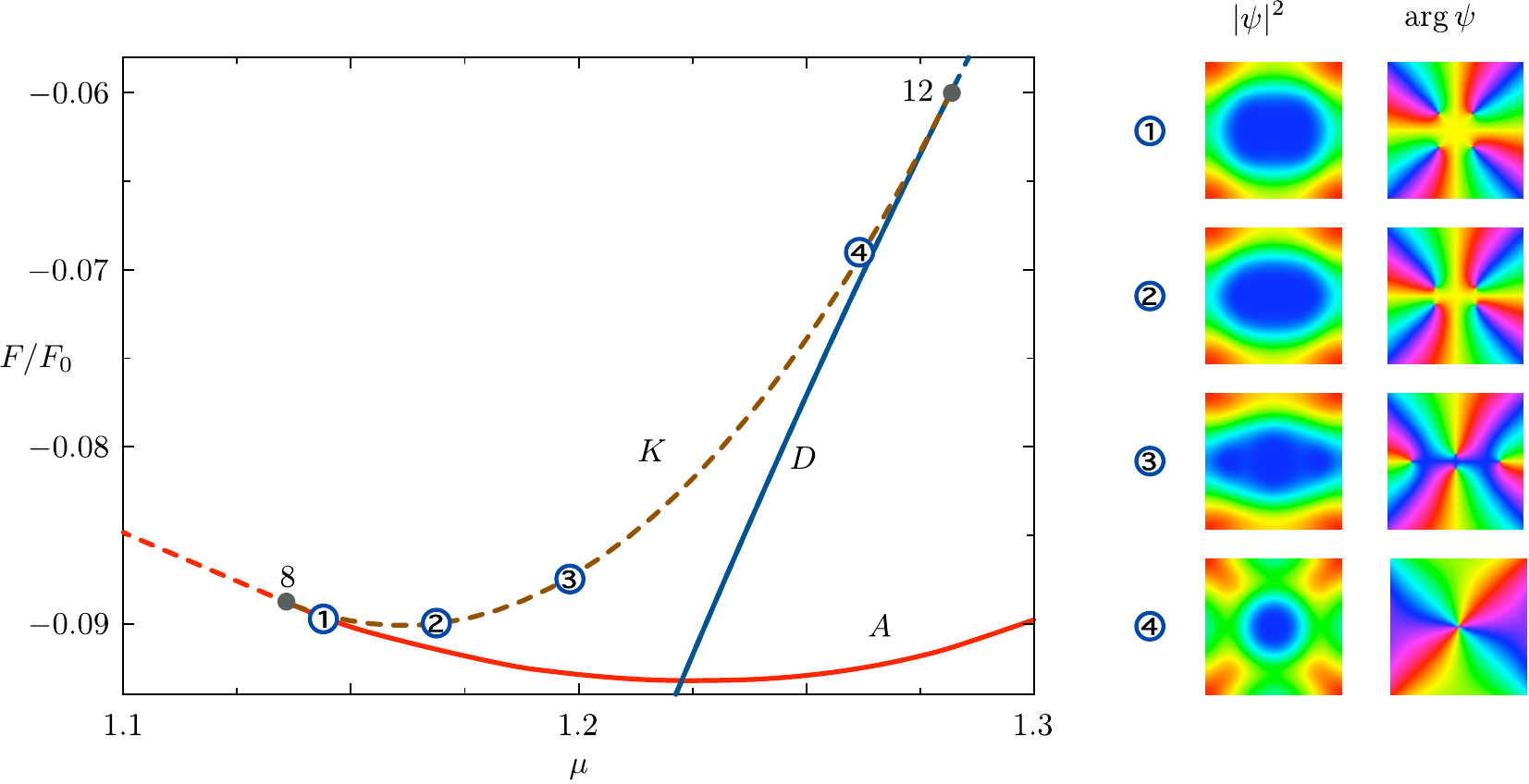}
  \caption{Patterns on branch $K$ in zone II (see Figure~\ref{fig:zoneII}). The
  branch bifurcates off from branch $A$, with four vortices, at bifurcation point
  $8$. Two of the four vortices are pushed out of the sample, while the remaining two
  reorganize into a giant vortex of multiplicity 2 at point 12.}
  \label{fig:ksequence}
\end{figure}

\begin{figure}
  \centering
  \includegraphics[width=1.0\textwidth]{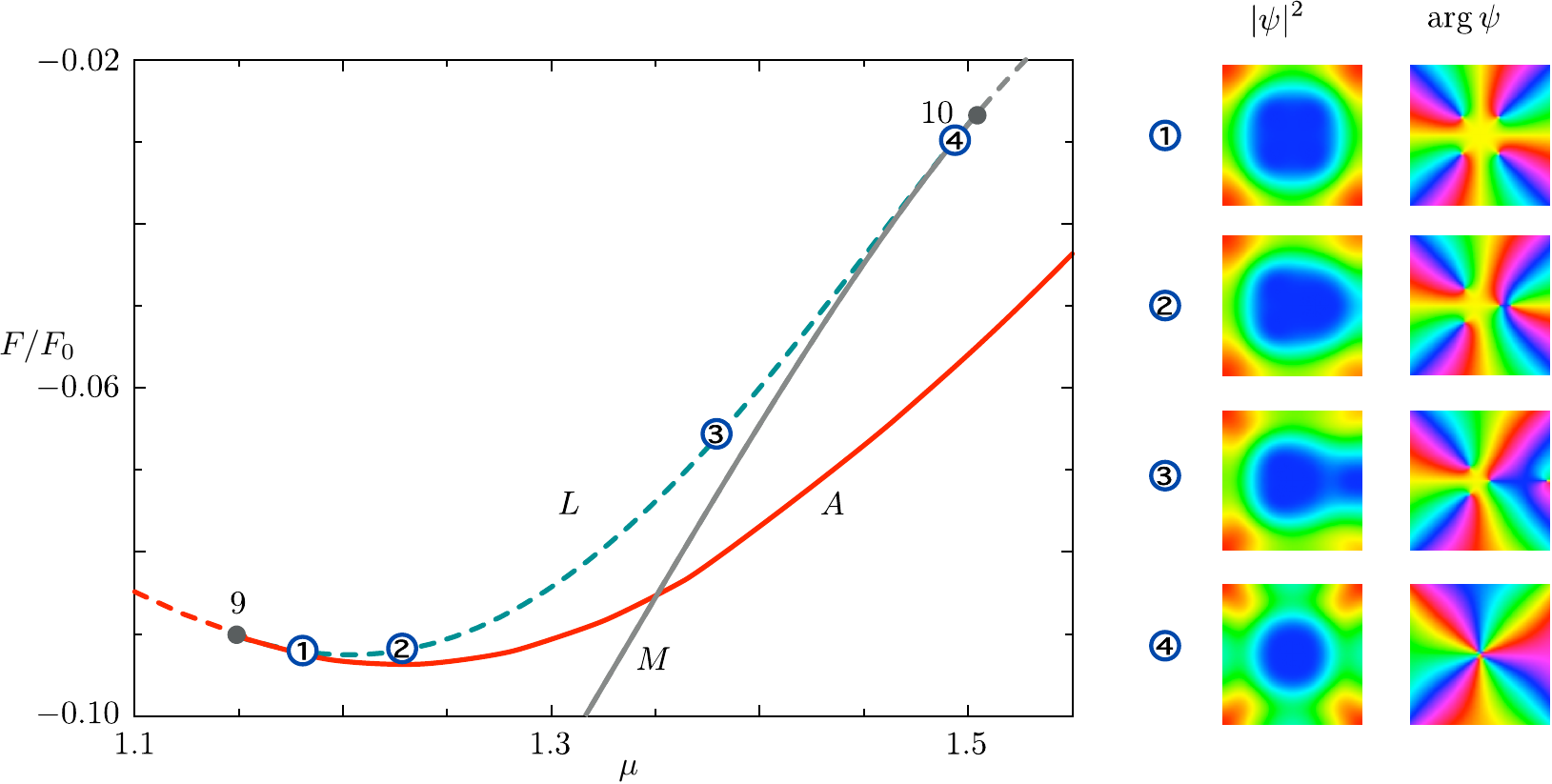}
  \caption{Patterns on branch $L$ in zone II (see Figure~\ref{fig:zoneII}). The
  patterns show five vortices: four vortices are arranged symmetrically, rather close
  to the center, and a single antivortex sits at the very center of the domain, so
  that the total vorticity of the configuration is 3. At bifurcation point $10$, when
  a giant vortex of multiplicity $3$ is formed, the pattern becomes unstable, on
  branch $M$.}
  \label{fig:lsequence}
\end{figure}

\section{Discussion and conclusions}
We have presented an initial exploration of the symmetry-breaking
bifurcations of the vortex patterns as modeled by the Ginzburg--Landau equations.
In the case of extreme type-II superconductors, we assumed a homogeneous applied
magnetic field and showed how the vortices reorganize as the
strength of the applied field is varied. In the small square domain ($d=3$), we
believe to have
given a complete account of the instabilities of the system. For a larger system, the
bifurcation diagram becomes much more complicated, and we found a large number
of states and symmetry-breaking bifurcations.

The paper also presents a study of the symmetries of the system.
It has been shown that the continuous system bears symmetries isomorphic to
$\T\times$\Dfour.
The discretization has been chosen in such a way that it
preserves to machine accuracy both phase and geometric symmetries. 

Owing to the symmetries of the system, it is possible to use the
Equivariant Branching Lemma in order to predict the existence of new branches at
symmetry-breaking bifurcations, and subsequently compute them numerically. To the best
of the authors' knowledge, most of the patterns contained in this paper are unknown to the
physics community: even though unstable patterns can not be obtained experimentally,
we point out that the methodology proposed in this context could be effectively used
to find new stable patterns.

The present paper analyzes the Ginzburg--Landau system on a square, but
the same technique can be applied to all geometries with inherent symmetries, e.g.,
regular $n$-gons.
It is not immediately obvious, though, how to choose the
magnetic vector potential gauge such that the corresponding Ginzburg--Landau
formulation remains equivariant with respect to $D_n$; some work in this area
has been done in~\cite{CC:2005:GLD}. Note that, for increasing $n$, the ever more complicated
subgroup structure of $D_n$ will lead to different bifurcation scenarios
\cite{golubitsky1988singularities,golubitsky2002symmetry}.

In the present paper we simplified the Ginzburg--Landau equations considering the
large-$\kappa$ limit, where the
equation for the magnetic vector potential $\A$ decouples from the order parameter $\psi$.
It will be necessary, in the future, to study the
bifurcations in the coupled system for intermediate and small
values of $\kappa$.
However, this task will also pose new numerical challenges: the magnetic vector
potential appears as an additional (vector-valued) unknown and its domain of definition is the whole space.
In practice, the vector potential will approach its boundary condition defined by
$\H_0$ sufficiently far away from the sample, but the validity of this approximation
is still an open problem.
The coupled system will in any case hold many more
unknowns, and a robust preconditioning strategy for solving the appearing
Jacobian systems will be crucial.
However, the regularization technique that we employed for the extreme type-II
case is applicable for finite values of $\kappa$ and for generic spatial
discretizations of the Ginzburg--Landau problem.

Nevertheless, we believe that results of this paper
are a first step in understanding the
bifurcations in the coupled Ginzburg--Landau system for various mesoscopic
systems that are relevant for nanoscale devices. The approach proposed here opens up
the possibility of a systematic exploration of the solution landscape in regions
that are precluded to direct numerical simulation.

\section*{Acknowledgements}
We acknowledge fruitful discussions with Golibjon Berdiyorov,
Milorad Milo\v{s}evi\'c,  Ben Xu, Bart Partoens,
Andrew G. Salinger, Eric T. Phipps, Mathieu Desroches, Rebecca Hoyle, and Philip
Aston. We are also grateful to FWO-Vlaanderen for financial support through
the project
G017408N. Daniele Avitabile acknowledges EPSRC for funding his research with the
grant EP/E032249/1.

\appendix

\section{Extension of Keller's bordering lemma}

Keller's bordering lemma~\cite{Keller:1976:NSB} provides conditions on how
a finite-dimensional linear system with a singularity of dimension $1$
can be regularized by adding an additional unknown as well as an additional equation.
In the present context, however, it is necessary to formulate the lemma in
general vector spaces. Also, the defect of the present problem may be greater
than one. Such situations occur, for example, in several branch points described
in section~\ref{sec:results}. The following lemma shows that it is always possible to remove one
of the singularities.

\begin{lemma}\label{lemma:keller}
Let $X$, $Y$ be $\K$-vector spaces and let $\L:X\to Y$ linear with
$\dim\ker\L = k>0$. Let further $b\in Y$, $d\in\K$, and $f:X\to\K$ a
linear functional. Let the operator $\widetilde{\L}: X\times\K \to
Y\times\K$ be defined by
\[
\widetilde{\L}
\tilde{x}
\dfn
\begin{pmatrix}
\L x + b \xi \\
f(x) + d \xi
\end{pmatrix}
\]
for all $\tilde{x} = (x,\xi)^\tp \in X\times\K$.
Then $\tilde{k}\dfn\dim\ker\widetilde{\L}<k$ if and only if $b\notin\range(\L)$
and there exists a $v\in\ker\L$ with $f(v)\neq0$.
\end{lemma}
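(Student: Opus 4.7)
The plan is to unpack the defining equations of $\ker\widetilde{\L}$ and split on the value of the bordering coordinate $\xi\in\K$. Explicitly, a pair $(x,\xi)\in X\times\K$ lies in $\ker\widetilde{\L}$ precisely when $\L x=-b\xi$ and $f(x)=-d\xi$. The $\xi=0$ slice yields the subspace $\{(x,0):x\in\ker\L,\ f(x)=0\}$, naturally identified with $\ker\L\cap\ker f$, while elements with $\xi\neq 0$ can exist only if $b\in\range(\L)$, since then $b=-\xi^{-1}\L x\in\range(\L)$. This single observation drives both implications.

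For the ``if'' direction I would assume both $b\notin\range(\L)$ and the existence of $v\in\ker\L$ with $f(v)\neq 0$. The first hypothesis forces $\xi=0$ in every element of $\ker\widetilde{\L}$, so $\ker\widetilde{\L}$ is canonically isomorphic to $\ker\L\cap\ker f$. The second hypothesis states that $f$, viewed as a linear functional on the $k$-dimensional space $\ker\L$, is nonzero; its kernel therefore has codimension one in $\ker\L$. Hence $\dim\ker\widetilde{\L}=k-1<k$.

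For the ``only if'' direction I would argue the contrapositive: if either $b\in\range(\L)$ or $f$ vanishes identically on $\ker\L$, then $\dim\ker\widetilde{\L}\geq k$. If $f|_{\ker\L}\equiv 0$, the map $v\mapsto(v,0)$ embeds the whole $k$-dimensional space $\ker\L$ into $\ker\widetilde{\L}$, and we are done. Otherwise we may assume $b\in\range(\L)$ while $f|_{\ker\L}\not\equiv 0$; then the $\xi=0$ slice already contributes a subspace of dimension $k-1$, and it suffices to produce one additional kernel element with $\xi\neq 0$. Writing $b=\L x_0$ and seeking a solution of the form $(x_0+u,\,-1)$ with $u\in\ker\L$, the first equation is automatically satisfied and the second reduces to $f(u)=d-f(x_0)$, which is solvable because the nonzero $\K$-valued functional $f|_{\ker\L}$ is surjective onto $\K$. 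The resulting element is linearly independent from the $\xi=0$ slice for free (its bordering coordinate is nonzero), giving $\dim\ker\widetilde{\L}\geq(k-1)+1=k$.

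The main subtlety to watch is the bookkeeping in the second subcase: one must verify that no dimension is double-counted, which is handled by the $\xi=-1\neq 0$ remark. Apart from this, the argument is purely algebraic and uses neither topology nor completeness on $X$ and $Y$; the only finiteness input is $\dim\ker\L=k<\infty$, which is what legitimizes the codimension-one conclusion for the kernel of a nonzero linear functional on $\ker\L$. I expect no surprises beyond carefully distinguishing the two mutually non-exclusive subcases of the contrapositive.
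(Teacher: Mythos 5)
Your proof is correct, and although it rests on the same two pivotal facts as the paper's proof --- that $b\notin\range(\L)$ forces the bordering coordinate $\xi$ of every element of $\ker\widetilde{\L}$ to vanish, and that a linear functional not vanishing identically on the $k$-dimensional space $\ker\L$ has a kernel of dimension $k-1$ there --- your packaging is genuinely different and tidier. In the forward direction, identifying $\ker\widetilde{\L}$ with $(\ker\L\cap\ker f)\times\{0\}$ yields the exact count $\tilde{k}=k-1$, whereas the paper only extracts $\tilde{k}\le k$ by expanding an arbitrary kernel element over a basis and then rules out $\tilde{k}=k$ by a separate contradiction. In the converse, which you run as a contrapositive with the two non-exclusive cases correctly separated (first $f|_{\ker\L}\equiv 0$, then $b\in\range(\L)$ with $f|_{\ker\L}\not\equiv 0$), your single extra kernel vector $(x_0+u,-1)$ with $u\in\ker\L$ solving $f(u)=d-f(x_0)$ --- available because a nonzero $\K$-valued functional on $\ker\L$ is surjective --- does the work of the paper's $k$-member family $S=\{(w^{(i)}-\xi_i\hat{p},\,\xi_i)^\tp\}$, whose construction requires the auxiliary shift $\hat{p}=p+\alpha v$ with $\alpha$ tuned so that $f(\hat{p})\neq d$ just to keep the coefficients $\xi_i$ well defined. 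Your route buys the sharper conclusion $\tilde{k}=k-1$ under the hypotheses and a converse free of that bookkeeping; both arguments use only the finiteness of $k$, as you note, so nothing in generality is lost.
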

\begin{proof}
On the one hand, let  $b\notin\range(\L)$ and let $v\in\ker\L$ with $f(v)\neq 0$.
Let $\{(w^{(i)},\xi_i)^\tp\}_{i=1}^{\tilde{k}}\subset X\times\K$ denote a basis of $\ker\widetilde{\L}$, and take a $\tilde{x}\in\ker\widetilde{\L}$,
\[
\tilde{x}
= \sum\limits_{i=1}^{\tilde{k}} \alpha_i
\begin{pmatrix}
w^{(i)}\\
\xi_i
\end{pmatrix}
\]
with arbitrary $\alpha_i\in\K$. With this representation, we have
\[
\begin{split}
0 &= \L  \sum\limits_{i=1}^{\tilde{k}} \alpha_i w^{(i)} + b  \sum\limits_{i=1}^{\tilde{k}} \alpha_i \xi_i,\\
0 &= f\left(\sum\limits_{i=1}^{\tilde{k}} \alpha_i w^{(i)}\right) + d \sum\limits_{i=1}^{\tilde{k}} \alpha_i \xi_i.
\end{split}
\]
Because $b\notin\range(\L)$, it must be $\sum_{i=1}^{\tilde{k}} \alpha_i \xi_i=0$
as otherwise
\[
b =  \left(\sum_{i=1}^{\tilde{k}} \alpha_i \xi_i\right)^{-1} \sum_{i=1}^{\tilde{k}} \alpha_i \L w^{(i)}\in\range(\L).
\]
Because the $\alpha_i$ are arbitrary, we have $\xi_i=0$ for all
$i$. Since $\{(w^{(i)},\xi_i)^\tp\}_{i=1}^{\tilde{k}}$ is linearly independent
in $X \times \mathbb{K}$ and all $\xi_i$ are zero,
$\{w^{(i)}\}_{i=1}^{\tilde{k}}$ is linearly independent in $X$. Besides that,
it follows that $\sum_{i=1}^{\tilde{k}} \alpha_i
w^{(i)}\in\ker\L$, and again because the $\alpha_i$ are arbitrary, we
have $w^{(i)}\in\ker\L$ for all $i\in\{1,\dots,\tilde{k}\}$. Hence
$\dim\ker\L\ge \tilde{k}$.   One can exclude  $\tilde{k}=\dim\ker\L$ since then
$\ker\L=\spn\{w^{(i)}\}_{i=1}^{\tilde{k}}$, and at the same time $0 =
f(\sum_{i=1}^k \alpha_i w^{(i)})$ for arbitrary $\alpha_i$.  This
 contradicts  the assumption there is a
$v\in\ker\L$ with $f(v)\neq 0$. Hence $\tilde{k}<k$.

On the other hand, let $\tilde{k}<k$. Consider the set
$W\dfn \ker\L\times\{0\}$. Obviously it is $\dim W=k$, and additionally
for any $\widetilde{w} = (w,0)^\tp \in W$, one has
\[
\widetilde{L} \widetilde{w}
=
\begin{pmatrix}
\L( w ) + 0\cdot b\\
f(w) + 0\cdot d
\end{pmatrix}
=
\begin{pmatrix}
0\\
f(w)
\end{pmatrix}.
\]
Hence, there must be a $v\in\ker\L$ with $f(v)\neq 0$ as
as otherwise $W\subseteq \ker\L$ and $\tilde{k}\ge k$.

It remains to be shown that $b\notin\range(\L)$, and we will do this by
contradiction:
Suppose that $b\in\range(\L)$ with a $p\in X$
such that $b=\L p$. Note that for any given $\alpha\in\R$, it is also
$b=\L(p+\alpha v)$, where $v\in\ker\L$ such that $f(v)\neq 0$.
Choose $\alpha$ such that $\alpha \neq (d-f(p))/f(v)$ and
let $\hat{p}\dfn p+\alpha v$ and
$S\dfn\{(w^{(i)}-\xi_i \hat{p},\xi_i)^\tp\}_{i=1}^k$
with $\xi_i\dfn f(w^{(i)})/(f(\hat{p})-d)$.
It can be checked that $S$ is linearly independent by taking
arbitrary $\{\beta_i\}_{i=1}^k\subset\R$ and demanding
\[
0 \stackrel{!}{=}
\sum_{i=1}^k \beta_i
\begin{pmatrix}
w^{(i)}-\xi_i \hat{p}\\
\xi_i
\end{pmatrix}.
\]
The second component yields $0=\sum_{i=1}^k \beta_i\xi_i$, which results in
\[
0 \stackrel{!}{=}
\sum_{i=1}^k \beta_i
\begin{pmatrix}
w^{(i)}\\
0
\end{pmatrix}
+
\sum_{i=1}^k \beta_i\xi_i
\begin{pmatrix}
- \hat{p}\\
1
\end{pmatrix}
=
\sum_{i=1}^k \beta_i
\begin{pmatrix}
w^{(i)}\\
0
\end{pmatrix}
\]
The set $\{w^{(i)}\}_{i=1}^k$ is, however, linearly independent such that all $\beta_i$ must vanish. Hence
$S$ is linearly independent.
But $S$ is also a subset of $\ker\widetilde{\L}$ as
\[
\widetilde{\L}
\begin{pmatrix}
w^{(i)}-\xi_i \hat{p}\\
\xi_i
\end{pmatrix}
=
\begin{pmatrix}
\L( w^{(i)}-\xi_i \hat{p}) + b \xi_i\\
f(w^{(i)}-\xi_i \hat{p}) + d \xi_i
\end{pmatrix}
=
\begin{pmatrix}
\L w^{(i)}-\xi_i \L \hat{p} + b \xi_i\\
f(w^{(i)})-\xi_i f(\hat{p}) + d \xi_i
\end{pmatrix}
=
\begin{pmatrix}
0\\
0
\end{pmatrix}
\]
This means that $\tilde{k}\ge k$, which is a
contradiction.

\end{proof}

\bibliographystyle{plain}
\bibliography{bibtex/gl}

\begin{thebibliography}{10}

\bibitem{abrikosov1957}
A.A. Abrikosov.
\newblock Magnetic properties of superconductors of the second group.
\newblock {\em Sov. Phys. JETP}, 5:1174, 1957.

\bibitem{aftalion2000asymptotic}
A.~Aftalion and S.J. Chapman.
\newblock Asymptotic analysis of a secondary bifurcation of the one-dimensional
  {Ginzburg-Landau} equations of superconductivity.
\newblock {\em SIAM Journal on Applied Mathematics}, 60(4):1157--1176, 2000.

\bibitem{aftalion2002bifurcation}
A.~Aftalion and Q.~Du.
\newblock The bifurcation diagrams for the {Ginzburg-Landau} system of
  superconductivity.
\newblock {\em Physica D: Nonlinear Phenomena}, 163(1-2):94--105, 2002.

\bibitem{aftaliontroy}
A.~Aftalion and W.C. Tray.
\newblock One the solutions of the the one-dimensional {Ginzburg-Landau}
  equations for superconductivity.
\newblock {\em Physica D}, 132:214--232, 1999.

\bibitem{aladyshkin2009nucleation}
A.Y. Aladyshkin, A.V. Silhanek, W.~Gillijns, and V.V. Moshchalkov.
\newblock Nucleation of superconductivity and vortex matter in
  superconductor--ferromagnet hybrids.
\newblock {\em Superconductor Science and Technology}, 22, 2009.

\bibitem{aranson2002}
I.S. Aranson and L.~Kramer.
\newblock The world of the complex {Ginzburg-Landau} equation.
\newblock {\em Reviews of Modern Physics}, 74(1):99--143, 2002.

\bibitem{Avron1978}
J.~Avron, I.~Herbst, and B.~Simon.
\newblock {Schr\"odinger operators with magnetic fields. I. General
  Interactions}.
\newblock {\em Duke Mathematical Journal}, 45(4):847--883, 1978.

\bibitem{BBX:2003:REE}
C.~Bacuta, J.H. Bramble, and J.~Xu.
\newblock Regularity estimates for elliptic boundary value problems with smooth
  data on polygonal domains.
\newblock {\em Journal of Numerical Mathematics}, 11(2):75--94, June 2003.

\bibitem{PhysRevB.65.104515}
B.J. Baelus and F.M. Peeters.
\newblock Dependence of the vortex configuration on the geometry of mesoscopic
  flat samples.
\newblock {\em Phys. Rev. B}, 65(10):104515, February 2002.

\bibitem{bethuel1994ginzburg}
F.~Bethuel, H.~Brezis, and F.~H\'elein.
\newblock {\em {Ginzburg-Landau} Vortices}.
\newblock Springer, 1994.

\bibitem{beyn2004}
W.J. Beyn and V.~Th{\"u}mmler.
\newblock Freezing solutions of equivariant evolution equations.
\newblock {\em SIAM Journal on Applied Dynamical Systems}, 3(2):85--116, 2004.

\bibitem{beyn2007}
W.J. Beyn and V.~Th{\"u}mmler.
\newblock {\em Numerical Continuation Methods for Dynamical Systems}, chapter
  Phase conditions, symmetries and PDE continuation, pages 301--330.
\newblock Canopus, Springer, 2007.

\bibitem{PhysRevB.70.144523}
L.R.E. Cabral, B.J. Baelus, and F.M. Peeters.
\newblock From vortex molecules to the {Abrikosov} lattice in thin mesoscopic
  superconducting disks.
\newblock {\em Phys. Rev. B}, 70(14), October 2004.

\bibitem{CS:2007:NCC}
A.R. Champneys and B.~Sandstede.
\newblock {\em Numerical Continuation Methods for Dynamical Systems}, chapter
  Numerical computation of coherent structures, pages 331--358.
\newblock Canopus, Springer, 2007.

\bibitem{chibotaru2000symmetry}
L.F. Chibotaru, A.~Ceulemans, V.~Bruyndoncx, and V.V. Moshchalkov.
\newblock Symmetry-induced formation of antivortices in mesoscopic
  superconductors.
\newblock {\em Nature}, 42(4):555--598, 2000.

\bibitem{CC:2005:GLD}
L.F. Chibotaru, A.~Ceulemans, M.~Morelle, G.~Teniers, C.~Carballeira, and V.V.
  Moshchalkov.
\newblock {Ginzburg--Landau} description of confinement and quantization
  effects in mesoscopic superconductors.
\newblock {\em Journal of Mathematical Physics}, 46(9), September 2005.

\bibitem{dancer2000global}
E.N. Dancer and S.P. Hastings.
\newblock On the global bifurcation diagram for the one-dimensional
  {Ginzburg--Landau} model of superconductivity.
\newblock {\em European Journal of Applied Mathematics}, 11(03):271--291, 2000.

\bibitem{demmel}
J.W. Demmel.
\newblock {\em Applied numerical linear algebra}.
\newblock Society for Industrial and Applied Mathematics, 1997.

\bibitem{PhysRevLett.79.4653}
P.~Singha Deo, V.A. Schweigert, F.M. Peeters, and A.K. Geim.
\newblock Magnetization of mesoscopic superconducting disks.
\newblock {\em Phys. Rev. Lett.}, 79(23):4653--4656, December 1997.

\bibitem{Du:1998:DGI}
Q.~Du.
\newblock Discrete gauge invariant approximations of a time dependent
  {Ginzburg--Landau} model of superconductivity.
\newblock {\em Math. Comput.}, 67(223):965--986, 1998.

\bibitem{DGP:1992:AAG}
Q.~Du, M.D. Gunzburger, and J.S. Peterson.
\newblock Analysis and approximation of the {Ginzburg--Landau} model of
  superconductivity.
\newblock {\em SIAM Rev.}, 34:54--81, March 1992.

\bibitem{DJ:2004:NSQ}
Q.~Du and L.~Ju.
\newblock Numerical simulations of the quantized vortices on a thin
  superconducting hollow sphere.
\newblock {\em Journal of Computational Physics}, 201:511--530, 2004.

\bibitem{golubitsky1988singularities}
M.~Golubitsky, D.G. Schaeffer, and I.~Stewart.
\newblock {\em Singularities and groups in bifurcation theory}.
\newblock Springer Verlag, 1988.

\bibitem{golubitsky2002symmetry}
M.~Golubitsky and I.~Stewart.
\newblock {\em The symmetry perspective}.
\newblock Birkh\"auser, 2002.

\bibitem{goodman1966}
B.B. Goodman.
\newblock Type ii superconductors.
\newblock {\em Reports on progress in physics}, 29:445, 1966.

\bibitem{HW:2003:TUG}
M.A. Heroux and J.M. Willenbring.
\newblock {Trilinos Users Guide}.
\newblock Technical Report SAND2003-2952, Sandia National Laboratories, 2003.

\bibitem{Hoyle:2006:PF}
R.~Hoyle.
\newblock {\em Pattern formation}.
\newblock Cambridge University Press, 2006.

\bibitem{KK:1995:VCT}
H.G. Kaper and M.K. Kwong.
\newblock Vortex configurations in type-{II} superconducting films.
\newblock {\em Journal of Computational Physics}, 119(1):120--131, June 1995.

\bibitem{Keller:1976:NSB}
H.B. Keller.
\newblock Numerical solution of bifurcation and nonlinear eigenvalue problems.
\newblock In Paul~H. Rabinowitz, editor, {\em Applications of bifurcation
  theory: proceedings of an advanced seminar}, pages 359--384, University of
  Wisconsin--Madison, October 1976. Academic Press, New York.

\bibitem{Kelley1995}
C.T. Kelley.
\newblock {\em Iterative Methods for Linear and Nonlinear Equations}, volume~16
  of {\em Frontiers in Applied Mathematics}.
\newblock SIAM, 1995.

\bibitem{krauskopf2007}
B.~Krauskopf.
\newblock {\em Numerical Continuation Methods for Dynamical Systems: {Path}
  following and boundary value problems}.
\newblock Springer Verlag, 2007.

\bibitem{LD:1997:GLV}
F.-H. Lin and Q.~Du.
\newblock {Ginzburg--Landau} vortices: dynamics, pinning, and hysteresis.
\newblock {\em SIAM J. Math. Anal.}, 28(6):1265--1293, 1997.

\bibitem{rowley2003}
C.W. Rowley, I.G. Kevrekidis, J.E. Marsden, and K.~Lust.
\newblock Reduction and reconstruction for self-similar dynamical systems.
\newblock {\em Nonlinearity}, 16:1257, 2003.

\bibitem{sandier2007vortices}
E.~Sandier and S.~Serfaty.
\newblock {\em Vortices in the magnetic {Ginzburg-Landau} model}.
\newblock Birkh\"auser, 2007.

\bibitem{PhysRevLett.81.2783}
V.A. Schweigert, F.M. Peeters, and P.~Singha Deo.
\newblock Vortex phase diagram for mesoscopic superconducting disks.
\newblock {\em Phys. Rev. Lett.}, 81(13):2783--2786, September 1998.

\bibitem{Schweitzer1967}
D.G. Schweitzer and M.~Garber.
\newblock Hysteresis in superconductors. {II.} {Experimental} tests for
  critical states.
\newblock {\em Phys. Rev.}, 160(2):348--358, August 1967.

\end{thebibliography}
\end{document}